 \pgfplotsset{compat=1.13}
\definecolor{color2}{rgb}{0.872549019607843,0.019607843137255,0.0819607843137255}
\definecolor{dred}{rgb}{0.92,0,0}
\newcommand\restr[2]{{
  \left.\kern-\nulldelimiterspace 
  #1 
  \vphantom{\big|} 
  \right|_{#2} 
  }} 
  \newcommand{\abs}[1]{\left\lvert#1\right\rvert}
\newcommand{\norm}[1]{\left\lVert#1\right\rVert}
\let\hat\widehat
\let\tilde\widetilde
\newcommand{\RN}[1]{%
  \textup{\uppercase\expandafter{\romannumeral#1}}%
}
\newtheorem{remark}{Remark}
\newcommand{\pd}[2]{\frac{\partial #1}{\partial #2}}
\begin{document}

\title{Overlapping Multi-Patch Isogeometric Method with Minimal Stabilization\thanks{Submitted.
\funding{The authors were partially supported by ERC AdG project CHANGE n. 694515.}}}
\date{November 2019}

\author{ Pablo Antolin\thanks{Chair of Modelling and Numerical Simulation, Institute of Mathematics, \'Ecole Polytechnique F\'ed\'erale de Lausanne, Lausanne, Switzerland (\email{pablo.antolin@epfl.ch}, \email{annalisa.buffa@epfl.ch}, \email{riccardo.puppi@epfl.ch}, \email{xiaodong.wei@epfl.ch}).}
\and Annalisa Buffa\footnotemark[2]\ \thanks{ Istituto di Matematica Applicata e Tecnologie Informatiche "Enrico Magenes" del CNR, Pavia, Italy.}
\and Riccardo Puppi\footnotemark[2]
 \and Xiaodong Wei\footnotemark[2]}.
\maketitle

\begin{abstract}
We present a novel method for isogeometric analysis (IGA) to directly work on geometries constructed by Boolean operations including difference (i.e., trimming), union and intersection. Particularly, this work focuses on the union operation, which involves multiple independent, generally non-conforming and trimmed spline patches. Given a series of patches, we overlay one on top of another in a certain order. While the invisible part of each patch is trimmed away, the visible parts of all the patches constitute the entire computational domain. We employ the Nitsche's method to weakly couple independent patches through visible interfaces. Moreover, we propose a minimal stabilization method to address the instability issue that arises on the interfaces shared by small trimmed elements. We show in theory that our proposed method recovers stability and guarantees well-posedness of the problem as well as optimal error estimates. In the end, we numerically verify the theory by solving the Poisson's equation on various geometries that are obtained by the union operation.
 \end{abstract}

\begin{keywords}
Boolean operations, union, trimming, overlapping meshes, stabilized methods, isogeometric methods
\end{keywords}

\begin{AMS}
  65N12, 65N15, 65N20, 65N30, 65N85
\end{AMS}

\sloppy

\section{Introduction}

Isogeometric analysis (IGA) was proposed to tightly integrate computer-aided design (CAD) and engineering simulation by utilizing basis functions in CAD to approximate the solution in simulation \cite{ref:hughes05, ref:cottrell09}. NURBS (non-uniform rational B-spline), the de facto industry standard of CAD, is the first choice of bases explored in IGA and it has shown many numerical advantages in terms of accuracy \cite{ref:evans09}, stability and robustness \cite{ref:lipton10}.  

While IGA has advanced in various aspects, constructing analysis-suitable geometric representations remains one of the most challenging problems because geometric representations in current CAD systems generally are not ready for analysis. On one hand, a volume description is generally needed to serve as the computational domain for analysis but standard CAD systems only employ a boundary representation (B-rep) for solid modeling without containing any information about the interior domain \cite{ref:cohen01}. On the other hand, Boolean operations, including trimming (i.e., difference), union and intersection, are universal in CAD systems to create complex free-form surfaces from simple primitives. As a consequence, a B-rep is a collection of trimmed spline surfaces with possible gaps and overlaps, which again pose a barrier to analysis because analysis requires a watertight geometric representation. 

Numerous efforts have been devoted to constructing volume descriptions and supporting Boolean operations. Constructing a volume description from a given B-rep, namely volumetric parameterization, seeks to create a trivariate geometric mapping. Representative methods include those based on B-splines \cite{ref:martin09, ref:xu13, ref:chen19} and T-splines \cite{ref:zhang12, ref:zhang13, ref:wang13, ref:liu14}. Patch gluing is another alternative to construct complex geometries and can be obtained in a variety of ways, such as the IETI (IsogEometric Tearing and Interconnecting) method \cite{ref:kleiss12} and the mortar method \cite{ref:brivadis15, ref:seitz16}. Moreover, it often relies on mesh intersection tools \cite{ref:gander09} to handle non-conforming patches.


Along the direction of supporting Boolean operations in IGA, most researches focus on shell techniques \cite{ref:schmidt12, ref:breitenberger15, ref:guo18, ref:teschemacher18, ref:leidinger19a} as B-reps naturally have a shell structure. The related major challenges include performing numerical integration for trimmed elements \cite{ref:kim09, ref:nagy15} and handling the stability issue caused by small trimmed elements \cite{ref:marussig18, ref:buffa19}. 

On the other hand, the use of V-reps (volume representations) \cite{ref:elber16} in IGA is at an early stage and requires careful investigations in both theory and algorithms. A precedent work initiated this effort with a focus on volumetric trimming \cite{ref:antolin19a}, where creating a suitable quadrature mesh for each trimmed volumetric element plays the key role. In this work, we continue along this direction to work on the union operation, where a geometry is built by combining multiple independent patches together. More precisely, given a series of spline patches, we  overlay one on top of another in a certain order, following the same manner as in the multi-mesh finite element method \cite{ref:johansson18, ref:johansson19}. While we employ the Nitsche's method to weakly couple these patches through visible interfaces, the use of one-sided fluxes and minimal stabilization inspired by \cite{ref:buffa19} allows us to prove stability of the proposed approach in completely general geometric configurations.

Note that while the theory is dimension-independent, our algorithms now are only ready for 2D geometries. Creating a suitable quadrature mesh for each interface is a major challenge, especially in 3D. The algorithm for volumes becomes much more involving due to the need of resolving the mesh-to-mesh intersection problem \cite{ref:brivadis15, ref:seitz16, ref:buffa18contact}, and will be the objective of a follow-up work. We also note that our work is related to various methods, such as the isogeometric mortar method \cite{ref:brivadis15}, IGA based on the constructive solid geometry \cite{ref:zuo15, ref:wassermann17}, and the overlapping multi-patch method \cite{ref:kargaran19}. Our method is different from them in handling numerical integration, volume description, and particularly, stabilization.

The paper is organized as follows. Section \ref{sec:notation} sets up the notations and necessary assumptions. The theory of overlapping multi-patch isogeometric analysis is presented in Section \ref{sec:ncmp}. In Section \ref{sec:impl}, we discuss how to create suitable quadrature meshes for interfaces, as well as how to implement the stabilization method. We next show several numerical examples in Section \ref{sec:result} to demonstrate the convergence and conditioning behavior of the proposed method. We conclude the paper and comment on future directions in Section \ref{sec:con}.

\section{Parametrization, mesh and approximation space for domains obtained via union}\label{sec:notation}
For the construction of isogeometric spaces in 1D and in the tensor product case we refer the interested reader to~\cite{ref:buffa19,buffa_acta}. As it is standard in IGA, we denote by $S_p\left(\Xi\right)$ the space of splines of degree $p$ defined on the parameter space $\hat\Omega:=\left(0,1\right)^d$, $d=2,3$, and on the open knot vector $\Xi$.
Let $\Omega_i^\ast \subset\R^d$, $0\le i\le N$ ($N\in\N$), $d\in\{2,3\}$, be a \emph{predomain}, i.e., before the union operation that is described later on. We assume that there exists a bi-Lipschitz map $\mathbf{F}_i\in \left(S_{\mathbf{p}^i}(\mathbf{\Xi}^i) \right)^d$ such that $\Omega_i^{\ast}=\mathbf{F}_i(\hat{\Omega})$, for given degree vector $\mathbf{p}^i$ and knot vector $\mathbf{\Xi}^i$. We define the (physical) \emph{B\'ezier premesh} as the image of the elements in $\hat{\mathcal{M}}_i$ (the parametric B\'ezier mesh naturally induced on $\hat{\Omega}_i$) through $\mathbf{F}_i$:
\begin{equation*}
\mathcal{M}_i^{\ast}:=\{K\subset\Omega_i^{\ast}: K=\mathbf{F}_i(Q), Q\in\hat{\mathcal{M}}_i \}.
\end{equation*}
For each element $K$, we denote as $\widetilde K$ its \emph{support extension} (see \cite{buffa_acta}), i.e. the union of the support of all basis functions that do not vanish on $K$.

%

Let $\hat{V}_{h,i}$ be a refinement of $S_{\mathbf{p}^i}(\mathbf{\Xi}^i)$ and
\begin{equation*}
V_{h,i}=\operatorname{span}\{B_{\mathbf{i},\mathbf{p}}(\mathbf{x}):=\hat{B}_{\mathbf{i},\mathbf{p}}\circ\mathbf{F}_i^{-1}(\mathbf{x}):\mathbf{i}\in\mathbf{I} \},
\end{equation*}
where $\{\hat{B}_{\mathbf{i},\mathbf{p}}: \mathbf{i}\in\mathbf{I} \}$ is a basis of $\hat{V}_{h,i}$. We define a partition $\{\Omega_i \}_{i=0}^N$ of $\Omega$ as 
 \begin{equation*}
 \Omega_i:=\Omega_i^\ast\setminus \bigcup_{l=i+1}^N\Omega_l^\ast,\qquad\forall\ i=0,\dots, N,
 \end{equation*}
 i.e., $\Omega_i$ is the \emph{visible part} of the predomain $\Omega_i^\ast$. We have $\Omega_N=\Omega_N^\ast$. See Figure~\ref{figure1}. Note that this choice of definition of $\Omega_i$ follows~\cite{ref:johansson18} and implies a \emph{hierarchy} of predomains. In particular, if $i>j$, $\Omega_i^\ast$ is \emph{on top of} $\Omega_j^\ast$ in the sense that $\Omega_i^\ast\cap\Omega_j^\ast$ is hidden by $\displaystyle\strut\cup_{k\ge i}\Omega_k$. We define 
  \begin{equation*}
 \Gamma_i:=\partial\Omega_i^\ast\setminus \bigcup_{l=i+1}^N\Omega_l^\ast,\qquad\forall\ i=0,\dots, N,
 \end{equation*}
 i.e., the interface $\Gamma_i$ is the \emph{visible part of the external boundary} of $\Omega_i^\ast$ with outer unit normal $n_i$.
Moreover, we define the \emph{local interfaces}
\begin{equation*}
\Gamma_{ij}:=\Gamma_i\cap\Omega_j,\qquad 0\le j<i\le N, 
\end{equation*}
i.e., $\Gamma_{ij}$ is the subset of the visible boundary of $\Omega_i^\ast$ that intersects $\Omega_j$. We assume that $\Gamma_{ij}$ inherits the orientation of $\Gamma_i$, hence it has unit normal $n_i$, also denoted as $n$ when it is clear from the context to which domain is referred to. Note that $\Gamma_{ij}$ is not connected in general. 

 \begin{figure}[!ht]
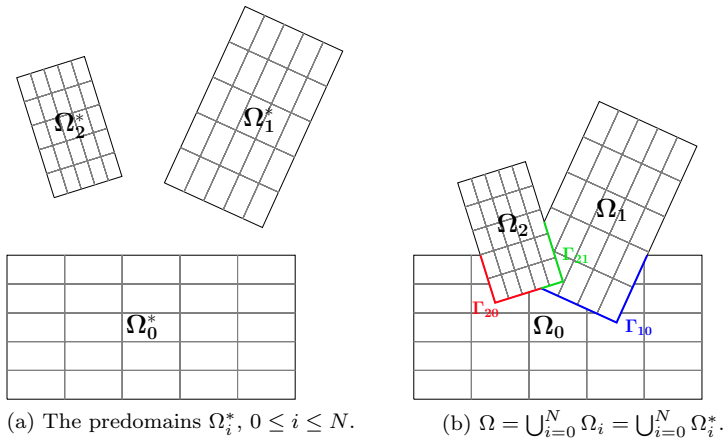

	\centering
	\subfloat[][The predomains $\Omega_i^\ast$, $0\le i \le N$.]
	{\includestandalone[width=0.35\textwidth]{overlap1}
	}
	\hspace{0.5cm}
	\subfloat[][$\Omega=\bigcup_{i=0}^N\Omega_i=\bigcup_{i=0}^N\Omega_i^\ast$.]
	{
		\includestandalone[width=0.35\textwidth]{overlap2}
	}
	\caption{Definitions of predomains (a), visible parts of predomains, and local interfaces ($\Gamma_{10}$, $\Gamma_{20}$ and $\Gamma_{21}$) of predomain boundaries (b).}\label{figure1}
\end{figure}

Let $i,j\in \{0,\dots, N\}$, with $i>j$. We define
\begin{equation*}
	\delta_{ij}=
	\begin{cases}
		1 \qquad\text{if}\;\; \Gamma_{ij}\ne\emptyset \\
		0 \qquad \text{otherwise},
	\end{cases} 
	\quad
	N_{\Gamma}:=\sum_{i=1}^N \sum_{j=0}^{i-1} \delta_{ij}.
\end{equation*}
Here we use the convention $\delta_{ii}=1$. $N_{\Gamma}$ counts the number of overlaps $\Gamma_{ij}$ that there exist in the multimesh configuration.
\begin{remark}
The stability estimates derived in this paper degenerate as $N_{\Gamma}$ grows.
\end{remark}	

We refer to $\mathcal M_i :=\{ K\in \mathcal M^\ast_i: K\cap\Omega_i\ne\emptyset  \}$, $\forall\  i=0,\dots,N,$
as the i-th \emph{extended mesh}, consisting of all visible elements of the i-th premesh $\mathcal M^\ast_i$. 
For every $K\in\mathcal M_i$, $0\le i\le N$, let $h_{i,K}:=\operatorname{diam}\left(K\right)$ and $h_{i,\min}:=\min_{K\in\mathcal M_i}h_{i,K}$. We define $\mathsf h_i:\Omega_i\to\mathbb{R}^+$ to be the piecewise constant mesh-size function of $\mathcal M_i$ defined as $\restr{\mathsf h_i}{K}:=h_{i,K}$.


In the following we assume, for the sake of simplicity, that adjacent sub-domains are discretized with similar mesh sizes. 
\begin{assumption}\label{assumption_uniformity}
The meshes locally have compatible sizes in the following sense. There exist $c,C>0$ such that for $\forall\ 1\le j< i \le N: \Gamma_{ij}\ne\emptyset$, $\forall\ K_i\in	\mathcal M_i : \overline K_i\cap\overline\Gamma_{ij}\ne\emptyset$, $\forall\ K_j\in	\mathcal M_j : \overline K_j\cap\overline\Gamma_{ij}\ne\emptyset$ it holds
\begin{equation*}
c  \restr{\mathsf h_j}{K_j}\le \restr{\mathsf h_i}{K_i} \le C \restr{\mathsf h_j}{K_j}.
\end{equation*}		
\end{assumption}	
	
\begin{remark}\label{remarkh}
Assumption~\ref{assumption_uniformity} has a few consequences:
\begin{equation*}
h_{i,K_i}=\left(h_{i,K_i} ^{-1}\right)^{-1}\simeq \left( 2h_{i,K_i}^{-1}\right)^{-1}\simeq \left( h_{i,K_i} ^{-1}+ h_{j,K_j}^{-1}\right)^{-1}.
\end{equation*}
hence  $h_{i,K_i}^{-\frac{1}{2}}\simeq \left( h_{i,K_i} ^{-1}+h_{j,K_j} ^{-1}\right)^{\frac{1}{2}}$.
Similarly, $h_{i,K_i} ^{\frac{1}{2}} \simeq \left(h_{i,K_i}^{-1}+h_{j,K_j} ^{-1}\right)^{-\frac{1}{2}}$.
\end{remark}
In what follows, for each $ 1\le j< i \le N: \Gamma_{ij}\ne\emptyset$ we denote by $\mathsf h_{ij}:\Gamma_{ij}\to\R^{+}$ the piecewise constant functions defined as $\mathsf h_{ij}^{-1}:=\mathsf h_{i}^{-1} + \mathsf h_{j}^{-1}$.
Finally, let us make a mild assumption on the roughness of the interfaces $\Gamma_{ij}$.
\begin{assumption}\label{mesh_assumptions}
There exists $C>0$ such that for $\forall\ 1\le j< i \le N: \Gamma_{ij}\ne\emptyset$, $\forall\ K\in	\mathcal M_j : \overline K\cap\overline\Gamma_{ij}\ne\emptyset$ it holds $\operatorname{meas}_{d-1}(\Gamma_{ij}\cap \overline K)\le C \restr{\mathsf h_j^{d-1}}{K}$.	
\end{assumption}	

\section{Isogeometric Analysis on Non-Conforming Multi-Patches}
\label{sec:ncmp}

\subsection{Model Problem and its Variational Formulation}

Let us consider the Poisson equation as the model problem. The goal is to solve it in the domain $\Omega$ via union,
\begin{equation*}
\Omega = \bigcup_{i=0}^N \Omega_i = \bigcup_{i=0}^N \Omega_i^{\ast}.	
\end{equation*}	
Given $f\in L^2(\Omega)$, $g_D\in H^{\frac{1}{2}}(\Gamma_D)$ and $g_N\in H^{-\frac{1}{2}}(\Gamma_N)$, find $u:\Omega\to\R$ such that
\begin{equation}\label{problem1}
\begin{cases}
-\Delta u = f\qquad&\text{in}\;\Omega\\
u=g_D\qquad&\text{on}\;\Gamma_D\\
\displaystyle{\frac{\partial u}{\partial n}}=g_N\qquad&\text{on}\;\Gamma_N,
\end{cases}
\end{equation}
where $\Gamma_D\cup\Gamma_N=\Gamma=:\partial\Omega$, $\Gamma_D\cap\Gamma_N=\emptyset$. We proceed, similarly to~\cite{ref:becker03, ref:stenberg98}, to rewrite Problem \eqref{problem1} in the following form: find $u\in V$ such that
 \begin{equation}\label{problem2}
 \begin{cases}
 -\Delta u_i = f\qquad&\text{in}\;\Omega_i,\; i=0,\dots, N\\
  u_i - u_j = 0 \qquad&\text{on}\;\Gamma_{ij},\;0\le j<i\le N\\
  \displaystyle{ \pderiv{u_i}{n_i} }+\pderiv{u_j}{n_j} = 0 \qquad&\text{on}\;\Gamma_{ij},\;0\le j<i\le N\\
 u=g_D\qquad&\text{on}\;\Gamma_D\\
 \displaystyle{\frac{\partial u}{\partial n}}=g_N\qquad&\text{on}\;\Gamma_N,
 \end{cases}
 \end{equation}
 where
 \begin{equation*}
 V:=\bigoplus_{i=0}^N\{v_i\in H^1(\Omega_i): \pderiv{v_i}{n_i}\in L^2(\Gamma_i), \restr{v_i}{\partial\Gamma_D\cap \Gamma_i}=0  \}. 
 \end{equation*}
 
 \begin{proposition}\label{prop1}
 	If the solution to Problem \eqref{problem1} satisfies $u\in H^{\frac{3}{2}+\eps}(\Omega)$,  that is,  it is sufficiently regular to have the normal derivative well-defined in $L^2$, then Problems \eqref{problem1} and \eqref{problem2} are equivalent. 
 	
 	
 \end{proposition}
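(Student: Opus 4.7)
The plan is to establish the two implications separately, using standard trace and integration-by-parts arguments; the regularity hypothesis $u\in H^{\frac{3}{2}+\eps}(\Omega)$ is needed only to give $L^2$ pointwise meaning to the interface flux equation.

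For the forward direction \eqref{problem1}$\Rightarrow$\eqref{problem2}, I would simply set $u_i:=\restr{u}{\Omega_i}$. The PDE $-\Delta u_i=f$ in $\Omega_i$ and the external conditions on $\Gamma_D$ and $\Gamma_N$ follow by localization. The continuity $u_i-u_j=0$ on $\Gamma_{ij}$ is the statement that the interior and exterior traces of a single $H^1(\Omega)$ function along the Lipschitz interface $\Gamma_{ij}$ coincide. For the flux identity, the assumption $u\in H^{\frac{3}{2}+\eps}(\Omega)$ implies $\nabla u\in H^{\frac{1}{2}+\eps}(\Omega)^d$, so $\nabla u\cdot n$ admits a well-defined $L^2$ trace on each piecewise-Lipschitz hypersurface; since on $\Gamma_{ij}$ the outward unit normals of $\Omega_i$ and $\Omega_j$ are opposite, single-valuedness of $\nabla u\cdot n$ yields $\pderiv{u_i}{n_i}+\pderiv{u_j}{n_j}=0$ a.e.\ on $\Gamma_{ij}$, and additionally $\pderiv{u_i}{n_i}\in L^2(\Gamma_i)$, as required by the definition of $V$.

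For the converse \eqref{problem2}$\Rightarrow$\eqref{problem1}, I define $u:\Omega\to\R$ piecewise by $\restr{u}{\Omega_i}:=u_i$. The trace-matching conditions on every $\Gamma_{ij}$, combined with the standard gluing lemma for piecewise $H^1$ functions on a Lipschitz partition, imply $u\in H^1(\Omega)$, and the Dirichlet condition on $\Gamma_D$ is inherited pointwise. To recover the distributional PDE $-\Delta u=f$, I test against an arbitrary $\varphi\in C_c^\infty(\Omega)$, split
\begin{equation*}
\int_\Omega \nabla u\cdot\nabla\varphi \;=\; \sum_{i=0}^N \int_{\Omega_i}\nabla u_i\cdot\nabla\varphi,
\end{equation*}
apply Green's identity on each $\Omega_i$, and regroup boundary contributions by interface. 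The volume terms sum to $\int_\Omega f\varphi$ by the first line of~\eqref{problem2}, while the interface terms collapse to $\sum_{i>j}\int_{\Gamma_{ij}}\left(\pderiv{u_i}{n_i}+\pderiv{u_j}{n_j}\right)\varphi$ and vanish by the third line of~\eqref{problem2}. The Neumann condition on $\Gamma_N$ is recovered in the same way, allowing the test function to have support up to $\Gamma_N$.

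The delicate bookkeeping step, and what I expect to be the main obstacle, is verifying that the union of all local interfaces $\Gamma_{ij}$ (for $i>j$) together with $\Gamma_D\cup\Gamma_N$ covers the entire internal skeleton $\bigcup_i\partial\Omega_i$ of the partition up to a set of $(d{-}1)$-dimensional measure zero; otherwise the sum of boundary integrals in Green's identity would leave unaccounted residuals. This can be checked from the hierarchical definition of $\Omega_i$ and $\Gamma_i$ as visible portions of the predomains together with the bi-Lipschitz regularity inherited from the parameterizations $\mathbf{F}_i$. Note that the regularity hypothesis $u\in H^{\frac{3}{2}+\eps}(\Omega)$ enters only in the forward direction; the converse uses only $V$-regularity and the interface equations on each piecewise-smooth $\Gamma_{ij}$.
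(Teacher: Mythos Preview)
Your argument is correct and is precisely the classical domain-decomposition equivalence proof; the paper does not actually spell out a proof but simply refers the reader to~\cite{ref:quarteroni99}, where this same restriction/gluing-plus-Green's-identity argument is given. Your identification of the interface bookkeeping as the only nontrivial step is accurate, and the hierarchical definition of the $\Omega_i$ and $\Gamma_{ij}$ indeed ensures that each internal facet of the partition is exactly one $\Gamma_{ij}$ up to a null set.
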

\begin{proof}
We refer the interested reader to~\cite{ref:quarteroni99}.
\end{proof}	

\begin{remark}
Note that the regularity assumption in Proposition~\ref{prop1} can be translated into a regularity assumption of the data and domains following~\cite{ref:becker03}. 
\end{remark}	


We define on each extended B\'ezier mesh $\mathcal M_i$ a conforming approximation space $V_{h,i}$. 
Then, we restrict it to each visible part $\Omega_i$ as
\begin{equation*}
\tilde V_{h,i}=\operatorname{span}\{\restr{B_{\mathbf{i},\mathbf{p}}}{\Omega_i}:\mathbf{i}\in\mathbf{I} \}.
\end{equation*}
The \emph{union isogeometric space} is
\begin{equation*}
V_h:=\bigoplus_{i=0}^N \tilde V_{h,i}.
\end{equation*} 
An element of $V_h$ is a $(N+1)$-tuple $v_h=\left( v_0,\dots,v_N \right)$. In practice, we can treat it as a scalar function thanks to the following embedding:
\begin{equation*}
V_h\hookrightarrow L^2(\Omega),\qquad
v_h(x)\mapsto v_i(x)\qquad\forall\ x\in\Omega_i, \,\forall\ i=0,\dots,N.
\end{equation*} 
 
 In the following we make use of the following notation. The \emph{jump term} on $\Gamma_{ij}$, $i>j$, is defined by $\displaystyle\strut\left[v_h\right]:=\restr{v_i}{\Gamma_{ij}}-\restr{v_j}{\Gamma_{ij}}$, where $\restr{v_i}{\Gamma_{ij}}$ denotes the trace of $v_i$ on $\Gamma_{ij}$. We define the approximation of the normal flux through $\Gamma_{ij}$ as $\displaystyle\strut \langle \pderiv{v_h}{n} \rangle_t:=t\pderiv{v_i}{n_i}+\left(1-t\right)\pderiv{v_j}{n_i}$, $t\in\{\frac{1}{2},1\}$. When $t=\frac{1}{2}$ it is a \emph{symmetric average flux}, while when $t=1$ we choose the \emph{one-sided flux} on the $\Omega_i$ side, in the spirit of~\cite{ref:hansbo03}, i.e. on the side of the domain which is on top of the other. As the choice $t=1$ will turn out to be the most convenient one, we may drop the index $s$ when it is equal to $1$.
 

 
The discrete variational formulation of the problem states as follows: find $u_h\in V_h$ such that
 \begin{equation}\label{non_stabilized_formulation}
 a_h(u_h,v_h) = F_h(v_h)\qquad \forall\ v_h\in V_h\cap H^1_{0,\Gamma_D}(\Omega),
 \end{equation}
 where
 \begin{equation}\label{bilinear_form}
 \begin{aligned}
 a_h(u_h,v_h)&:=\sum_{i=0}^N\int_{\Omega_i}\nabla u_i\cdot \nabla v_i
 -\sum_{i=1}^N\sum_{j=0}^{i-1}\int_{\Gamma_{ij}}\left(\langle \pderiv{u_h}{n}\rangle_t\left[v_h\right]+\left[u_h\right]\langle \pderiv{v_h}{n}\rangle_t\right)\\
 &+\beta\sum_{i=1}^N\sum_{j=0}^{i-1}\int_{\Gamma_{ij}}\mathsf h_{ij}^{-1}\left[u_h\right]\left[v_h\right],
 \end{aligned}
 \end{equation}
 with $t\in\{\frac{1}{2},1\}$, and
 \begin{equation}
 \begin{aligned}
 F_h(v_h):=&\sum_{i=0}^N\int_{\Omega_i}fv_i + \int_{\Gamma_N} g_N v_h.
 \end{aligned}
 \end{equation}
 Note that $\beta>0$ is a penalty parameter related to the spline degree, and its specific choice will be discussed in Section \ref{sec:result}.
 
 \begin{proposition}
The discrete variational formulation in Equation~\eqref{non_stabilized_formulation} is consistent, i.e., the solution $u$ to the strong Problem~\eqref{problem1} satisfies Equation~\eqref{non_stabilized_formulation} as well.
 \end{proposition}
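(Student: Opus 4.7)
The plan is the standard Galerkin consistency argument adapted to the hierarchical decomposition, where the only subtleties are the bookkeeping of normals on the local interfaces $\Gamma_{ij}$ and the role of the one-sided flux parameter $t$. Throughout I use that, by the regularity assumption of Proposition~\ref{prop1}, the exact solution $u$ admits an $L^2$ normal trace on each $\Gamma_i$, so every boundary integral below is well defined.

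First I test the strong equation $-\Delta u=f$ against an arbitrary $v_h=(v_0,\dots,v_N)\in V_h\cap H^1_{0,\Gamma_D}(\Omega)$ on each visible piece $\Omega_i$ separately and integrate by parts:
\begin{equation*}
\int_{\Omega_i} f v_i \;=\; \int_{\Omega_i} \nabla u\cdot\nabla v_i \;-\; \int_{\partial\Omega_i} \frac{\partial u}{\partial\mathbf n}\, v_i,
\end{equation*}
where $\mathbf n$ denotes the outer unit normal to $\Omega_i$. Summing over $i=0,\dots,N$ recovers the volumetric part of $a_h(u,v_h)$ and produces the external contribution $\int_{\Gamma_N} g_N v_h$ (after using $v_h=0$ on $\Gamma_D$ and the Neumann condition), leaving the interior boundary terms to be matched.

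The main bookkeeping step is to rewrite the remaining boundary integrals in terms of the local interfaces. For $j<i$, the piece $\Gamma_{ij}$ appears in $\partial\Omega_i$ with outer normal $n_i$ and in $\partial\Omega_j$ with outer normal $-n_i$ (since $\Omega_j$ lies beneath $\Omega_i^\ast$). Therefore the interior part of $\sum_i\int_{\partial\Omega_i} (\partial u/\partial\mathbf n) v_i$ collapses to
\begin{equation*}
\sum_{i=1}^N\sum_{j=0}^{i-1} \int_{\Gamma_{ij}} \Bigl(\frac{\partial u_i}{\partial n_i}\, v_i - \frac{\partial u_j}{\partial n_i}\, v_j\Bigr).
\end{equation*}
Because $u\in H^{3/2+\epsilon}(\Omega)$, its restrictions satisfy $u_i=u_j$ and $\partial u_i/\partial n_i=\partial u_j/\partial n_i$ on $\Gamma_{ij}$, so the integrand simplifies to $(\partial u/\partial n_i)(v_i-v_j)=\langle\partial u/\partial n\rangle_t\,[v_h]$ for either choice $t\in\{\tfrac12,1\}$.

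Finally, the symmetry term $\int_{\Gamma_{ij}}[u]\langle\partial v_h/\partial n\rangle_t$ and the penalty term $\beta\int_{\Gamma_{ij}}\mathsf h_{ij}^{-1}[u][v_h]$ both vanish identically, since $[u]=u_i-u_j=0$ on every $\Gamma_{ij}$ by continuity of the exact solution. Assembling the three pieces yields $a_h(u,v_h)=F_h(v_h)$, which is exactly~\eqref{non_stabilized_formulation}. The only real hurdle is keeping the orientation conventions straight when re-indexing the sum over $l>i$ into a sum over $j<i$; everything else follows from elementwise Green's formula.
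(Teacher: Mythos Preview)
Your argument is correct and is exactly the classical consistency calculation the paper has in mind; the paper itself does not spell it out but simply refers to~\cite{ref:becker03,ref:hansbo03}. Your bookkeeping of the interface normals (each $\Gamma_{ij}$ appearing in $\partial\Omega_i$ with normal $n_i$ and in $\partial\Omega_j$ with normal $-n_i$) and the observation that $[u]=0$ kills the symmetry and penalty terms are precisely the ingredients of the standard proof.
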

\begin{proof}
	The proof is quite classical. See for instance~\cite{ref:becker03,ref:hansbo03}.
\end{proof}	
 
\subsection{Quasi-interpolation strategy}
 Let us generalize the interpolation strategy employed in~\cite{ref:buffa19}. Giving a Sobolev function living in the whole physical domain $\Omega$, we consider its components in each visible part $\Omega_i$, extend them to the predomains $\Omega_i^\ast$ in order to be able to interpolate on each mesh $\mathcal M_i$ and finally glue together the interpolated functions. 
 
 We construct a spline quasi-interpolant operator for the space $\tilde V_h$. Given a fixed $s> \frac{1}{2}$, for every $i\in\{0,\dots,N\}$, we have
 \begin{equation*}
 	\tilde \Pi^i_h:H^{s+1}(\Omega_i)\to \tilde V_{h,i},\qquad
 	u\mapsto \Pi_h^i\left(\restr{E^i\left(u \right)}{\Omega_i^\ast}\right) ,
 \end{equation*}
 where $E^i:H^{r+1}(\Omega_i)\to H^{s+1}(\R^d)$
 is the Sobolev-Stein extension operator (see Section 3.2 in~\cite{oswald}) and $\Pi_h^i: H^{s+1}(\Omega_i^{\ast})\to V_h$ is a standard spline quasi-interpolant operator (see~\cite{Buffa2016}). Then, we glue together the local operators as follows:
 \begin{equation*}
 	\Pi_h:H^{s+1}(\Omega)\to V_h,\qquad 
 	v\mapsto \bigoplus_{i=0}^N \tilde \Pi_h^i\left(v_i\right)\in V_h.
 \end{equation*}
 
 \begin{proposition}[Interpolation error estimate]
 	Let $\frac{1}{2} < s \le p$. Then there exists $C>0$ such that	
 	\begin{equation*}
 		\norm{u-\Pi_h u}_{1,h}\le C h^s\norm{u}_{H^{s+1}(\Omega)}\qquad\forall\ u\in H^{s+1}(\Omega).
 	\end{equation*}	
 \end{proposition}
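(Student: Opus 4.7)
The plan is to reduce the estimate to single-patch estimates controlled by the classical B-spline quasi-interpolation theory on each premesh $\mathcal M_i^\ast$, and then aggregate them via the splitting of the broken norm $\|\cdot\|_{1,h}$ into a bulk gradient piece and, on every visible interface $\Gamma_{ij}$, a jump piece weighted by $\mathsf h_{ij}^{-1/2}$ and a flux piece weighted by $\mathsf h_{ij}^{1/2}$. Setting $\tilde e_i := E^i u - \Pi_h^i E^i u$ on $\Omega_i^\ast$, so that by construction $(u - \Pi_h u)|_{\Omega_i} = \tilde e_i|_{\Omega_i}$, I would treat each of the three contributions separately.

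For the bulk term, I would use $\|\nabla(u - \Pi_h u)\|_{L^2(\Omega_i)} \le \|\nabla \tilde e_i\|_{L^2(\Omega_i^\ast)}$, then invoke the standard quasi-interpolation estimate of~\cite{Buffa2016} on the tensor-product mesh $\mathcal M_i^\ast$ to get $\|\nabla \tilde e_i\|_{L^2(\Omega_i^\ast)} \le C h^s \|E^i u\|_{H^{s+1}(\Omega_i^\ast)}$, and finally collapse the right-hand side to $C h^s \|u\|_{H^{s+1}(\Omega_i)}$ by continuity of the Sobolev--Stein extension. Squaring and summing over $i$ gives the bulk bound. For the jump term, the key observation is that since $s > 1/2$ the function $u$ has matching one-sided traces from $\Omega_i$ and $\Omega_j$ on $\Gamma_{ij}$, hence $[u] = 0$ and $[u - \Pi_h u] = \tilde e_j|_{\Gamma_{ij}} - \tilde e_i|_{\Gamma_{ij}}$. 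On each element $K \in \mathcal M_i$ meeting $\Gamma_{ij}$ I would apply a cut-element trace inequality of the form
\[
\|\tilde e_i\|_{L^2(\Gamma_{ij} \cap K)}^2 \le C\bigl(h_{i,K}^{-1} \|\tilde e_i\|_{L^2(K)}^2 + h_{i,K} |\tilde e_i|_{H^1(K)}^2\bigr),
\]
whose validity on the \emph{cut} subset $\Gamma_{ij} \cap K$ (rather than on a full element face) is exactly what Assumption~\ref{mesh_assumptions} buys, while Remark~\ref{remarkh} together with Assumption~\ref{assumption_uniformity} yields $\mathsf h_{ij}^{-1/2} \simeq h_{i,K}^{-1/2}$. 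Combining this with the local $L^2$- and $H^1$-approximation estimates of $\Pi_h^i$ on the support extension $\widetilde K$, and summing over the elements of both $\mathcal M_i$ and $\mathcal M_j$ that touch $\Gamma_{ij}$, delivers the desired $h^s$ jump bound. The flux term is handled in exactly the same way, applied to $\nabla \tilde e_i$ in place of $\tilde e_i$; the extra order of local regularity this consumes is still available since $u \in H^{s+1}(\Omega)$ with $s > 1/2$.

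The main obstacle, and the reason Assumptions~\ref{assumption_uniformity} and~\ref{mesh_assumptions} appear in tandem, is that the interfaces $\Gamma_{ij}$ are not aligned with either mesh and may intersect elements in arbitrarily thin slivers, so one cannot appeal directly to a standard whole-face trace inequality on $\partial K$. These two assumptions were designed precisely to convert such cut-element trace integrals into ordinary whole-element Sobolev quantities, at which point the one-patch quasi-interpolation theory of~\cite{Buffa2016,ref:buffa19} closes the estimate; the dependence of the final constant on the number $N_\Gamma$ of interfaces is absorbed through the final summation, consistently with the remark already noted in Section~\ref{sec:notation}.
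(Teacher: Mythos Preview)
The paper does not actually give a proof of this proposition: it writes ``The proof is rather standard and we omit it.'' Your sketch supplies precisely the standard argument one would expect, and the reduction to single-patch quasi-interpolation estimates via the Sobolev--Stein extension, followed by a cut-element trace inequality (legitimized by Assumption~\ref{mesh_assumptions}) and the mesh-size comparability of Assumption~\ref{assumption_uniformity}/Remark~\ref{remarkh}, is the right machinery.

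One correction: the norm $\norm{\cdot}_{1,h}$ defined in~\eqref{norm} contains only the bulk gradient term and the $\mathsf h_{ij}^{-1/2}$-weighted jump term; there is \emph{no} $\mathsf h_{ij}^{1/2}$-weighted flux term. Your third contribution is therefore superfluous here (though harmless, and indeed such a bound is what underlies the separate approximation property~\eqref{ineq_approx} of the stabilization operator). Dropping it, your argument goes through as written.
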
	
 \begin{proof}
 	The proof is rather standard and we omit it.
 \end{proof}

 \subsection{Minimal Stabilization}
 \label{sec:ministab}
 
 It has been shown in~\cite{ref:buffa19} that Problem~\eqref{non_stabilized_formulation} may suffer of instability due to the evaluation of the normal derivatives in bad cut elements\footnote{Roughly speaking, elements for which a large portion has been cut away. The exact definition will be given below.}. In the two-patch situation, such as in Figure~\ref{interface_overlap}(a), we do not have the instability issue as soon as we are using the one-sided flux from top elements that are not cut. However, we do have this issue in general cases with many patches; see Figure~\ref{interface_overlap}(b), where the one-sided flux regarding the interface $\Gamma_{ij}$ may come from the red element, a, possibly bad, cut element. In this regard, the so-called minimal stabilization in~\cite{ref:buffa19} comes to help. The minimal stabilization mainly follows three steps: (1) distinguishing good and bad elements depending on how elements are cut; (2) finding a good neighbor for each bad element; and (3) stabilizing normal derivatives with the help of the good neighbor. In what follows, we need to accommodate this method in the context of multi-patches that overlap.
 
 \begin{figure}[!ht]
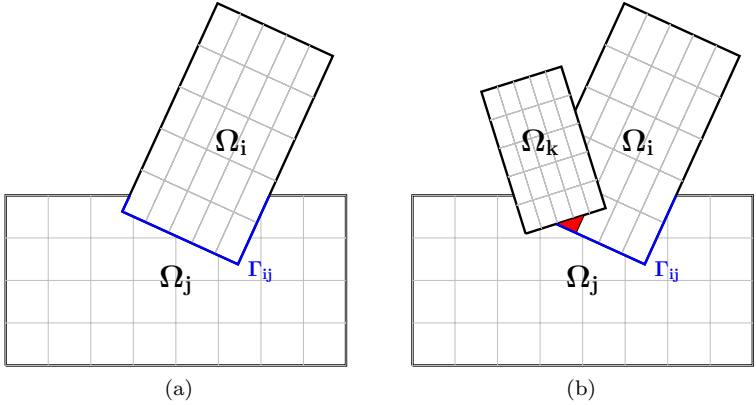

 	\centering
 	\subfloat[][]
 	{\includestandalone[width=0.35\textwidth]{small_intersection_j_i_bis}
 	}
 	\hspace{0.5cm}
 	\subfloat[][]
 	{
 		\includestandalone[width=0.35\textwidth]{small_intersection_j_i}
 	}
	\caption{Two-patch and three-patch overlapping along the interface $\Gamma_{ij}$.}
 	 \label{interface_overlap}
 \end{figure}
 
First, for each extended B\'{e}zier mesh $\hat {\mathcal M}_i$ ($i=0,\dots, N-1$), we partition its elements into two disjoint sub-families in the following. 

 \begin{definition}\label{def_goodandbad}
 	Fix $i=0,\dots, N-1$, $\theta\in (0,1]$ and let $Q\in\hat{\mathcal{M}}_i$. We say that $Q$ is a \emph{good element} if
 	\begin{equation}\label{good_bad_def}
 	\frac{\operatorname{meas}_{d}\left(\hat\Omega\cap Q\right)}{\operatorname{meas}_d\left(Q\right)}\ge\theta.
 	\end{equation}
 	Otherwise, $Q$ is a \emph{bad element}. As $\F_i$ is assumed to be bi-Lipschitz, this classification on parametric elements induces naturally a classification for the corresponding physical elements.
 	
	We denote as $\mathcal M^g_i$ and $\mathcal M^b_i$ the collection of good and bad physical B\'ezier elements, respectively. Note that all the non-cut elements in $\mathcal M_i$ are good elements, and all the bad elements are cut elements.
 \end{definition}
 
 \begin{definition}\label{def_neigh}
 	Given a physical B\'{e}zier element $K\in\mathcal M_i$, the set of its neighbors is defined as
 	\begin{equation*}
 	\mathcal N (K):=\{K'\in\mathcal M_k : \operatorname{dist}\left(K,K'\right)\le C\restr{\mathsf h_i}{K},\quad k=0,1,\ldots,N \},
 	\end{equation*}
 	where $C>0$ does not depend on the mesh-sizes.
 \end{definition}

 	Next, for each bad cut element $K\in\mathcal M_i^b$, $0\le i < N$,  we associate a good neighbor $K'$ (a neighbor that is a good element). Note that in principle we allow $K'\in \mathcal M_k$ with $i\neq k$, i.e., a good neighbor can belong to the mesh of another domain. Let $K\in\mathcal M_i^b$, $0\le i < N$. Its associated good neighbor $K'$ is chosen according to the following the procedure:
 	\begin{itemize}
 		\item \emph{Step 1}: We search in $\mathcal M_i^g$ to check if there is any good neighbor that belongs to the same mesh. If so, we have $K'\in\mathcal N(K)\cap\mathcal M_i^g$; otherwise we proceed to \emph{Step 2}.
 		\item \emph{Step 2}: We switch to a top domain and search for $K'\in\mathcal N(K)\cap\mathcal M_k^g$ ($k>i$).
 	\end{itemize}
 Figure~\ref{fig:goodnb} shows the good neighbor in the same and different domains.

 \begin{figure}
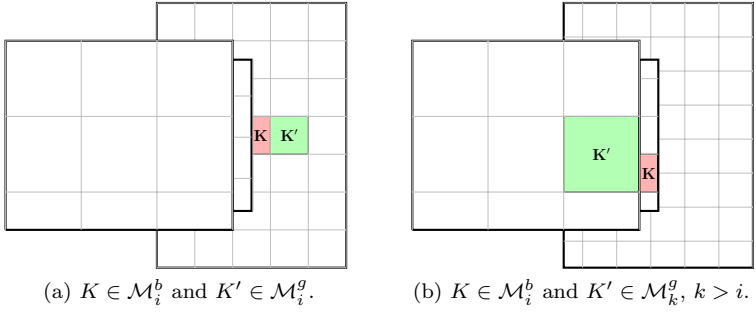

 	\centering
 	\subfloat[][$K\in\mathcal M_i^b$ and $K'\in\mathcal M_i^g$.]
 	{\includestandalone[width=0.35\textwidth]{good_bad_choice_multi1}
 	}
 	\hspace{0.5cm}
 	\subfloat[][$K\in\mathcal M_i^b$ and $K'\in\mathcal M_k^g$, $k>i$.]
 	{
 		\includestandalone[width=0.35\textwidth]{good_bad_choice_multi}
 	}
 	\caption{The good neighbor $K'$ located in the same domain as $K$ (a), and in a different (top) domain (b).}
 	 \label{fig:goodnb}
 \end{figure}
 Let us denote for each $0\le j<N$
 \begin{equation*}
 	\partial\Omega_j:= \overline \Gamma \cup \bigcup_{i>j}\overline \Gamma_{ij}.
 \end{equation*}	
 We define the stabilization operator
 \begin{equation*}
 \begin{aligned}	
  R_h:V_h&\to L^2\left(\bigcup_{j=0}^{N-1}\partial\Omega_j\right)\\  
  v_h&\mapsto\left(R_0(v_h),\dots,R_{N-1}(v_h), \frac{\partial v_N}{\partial n_N}\right).
  \end{aligned}
\end{equation*}
For $0\le j<N$, $R_j:\widetilde V_{h,j}\to L^2\left(\partial\Omega_j \right)$ is defined locally $\forall\ K\in\mathcal M_j :  \overline K\cap\partial\Omega_j\ne\emptyset$:
\begin{itemize} 
\item if $\overline K \cap\overline \Gamma_j\ne\emptyset$ or $K\in\mathcal M_j^g$, then
 	\begin{equation*}
	\restr{R_j(v_h)}{\overline K\cap\partial\Omega_j}:=\restr{\pderiv{v_j}{n_j}}{\overline K\cap\partial\Omega_j},
\end{equation*}	
\item if $K\in\mathcal M_j^b$, $K'\in\mathcal M_i^g$, $0\le j<i\le N$ its good neighbor, then
 	\begin{equation}
	\restr{R_j(v_h)}{\overline K\cap\partial\Omega_j}:=\restr{\pderiv{\mathcal E\left( P_{K'}\left( \restr{v_i}{K'}\right)\right)}{n_j}}{\overline K\cap\partial\Omega_j},
\label{eq:nmdevstab}
\end{equation}	
where $P_{K'}:L^2(K')\to\mathbb{Q}_p(K')$ is the $L^2$-orthogonal projection onto $\mathbb{Q}_p(K')$ (i.e., the space of polynomials on $K'$ of degree $p$ in each coordinate direction) and $\mathcal E:\mathbb{Q}_p(K')\to\mathbb{Q}_p(K'\cup K)$ is the polynomial natural extension.
\end{itemize}


 From~\cite{ref:buffa19}, we know that the following properties hold for the patchwise stabilization operators. For $0\le j<i\le N$ we have:
\begin{itemize}
	\item a \emph{stability property}: $\forall\ v_h\in V_h$, $\forall\ K\in\mathcal M_j^b$, $0\le j<i\le N$, let $K'$ be its associated good neighbor. Then:
	\begin{equation}\label{ineq_trace}
	\norm{\mathsf h_l^{\frac{1}{2}}R_l(v_h)}_{L^2(\Gamma_{ij}\cap K)}\le  C_S \norm{\nabla v_h}_{L^2(K'\cap\Omega_i)},\qquad l\in\{i,j\},
	\end{equation}
	\item an \emph{approximation property}: $\forall\ v\in H^{s+1}\left(\Omega\right)$, $s> \frac{1}{2}$,
	\begin{equation}\label{ineq_approx}
		\norm{\mathsf h_l^{\frac{1}{2}}\left(R_l(\Pi_h(v))-\frac{\partial v}{\partial n}\right)}_{L^2(\Gamma_{ij}\cap \overline K)}\le C_A h^s \norm{v}_{H^{s+1}(\tilde K\cup \tilde K')},\qquad l\in\{i,j\}.
	\end{equation}
\end{itemize}

Let us denote, for $0\le j\le i\le N$ such that $\Gamma_{ij}\ne\emptyset$ and $t\in\{\frac{1}{2},1\}$,
 \begin{equation}\label{global_stab_op}
 \langle R_h\left(v_h\right)\rangle_t : = t \restr{R_i\left(v_h\right)}{\Gamma_{ij}}+\restr{\left(1-t\right) R_j\left(v_h\right)}{\Gamma_{ij}}.
 \end{equation}
 
\begin{remark}
Although $K$ and $K'$ may belong to different meshes, the proofs of~\eqref{ineq_trace} and~\eqref{ineq_approx} immediately follow those in~\cite{ref:buffa19} due to Assumption~\ref{assumption_uniformity}.
\end{remark}	
	
 We propose the following stabilized weak formulation: find  $u_h\in V_h\cap H^1_{0,\Gamma_D}(\Omega)$ such that
\begin{equation}\label{stabilized_formulation}
\overline{a}_h(u_h,v_h) = F_h(v_h)\qquad \forall\ v_h\in V_h\cap H^1_{0,\Gamma_D}(\Omega).
\end{equation}
Here, the bilinear form is defined, for $t\in\{\frac{1}{2},1\}$, as
\begin{equation}\label{stab_bilinear_form}
\begin{aligned}
\overline{a}_h(u_h,v_h):=\sum_{i=0}^N&\int_{\Omega_i}\nabla u_i\cdot \nabla v_i-\sum_{i=1}^N\sum_{j=0}^{i-1}\int_{\Gamma_{ij}}\left(\langle R_h( u_h)\rangle_t\left[v_h\right]+\left[u_h\right]\langle R_h(v_h)\rangle_t\right)\\
&+\beta\sum_{i=1}^N\sum_{j=0}^{i-1}\int_{\Gamma_{ij}}\mathsf h_{ij}^{-1}\left[u_h\right]\left[v_h\right],
\end{aligned}
\end{equation}
where $\beta>0$ is a penalty parameter. We will employ the following mesh-dependent norm for the subsequent analysis,
\begin{equation}\label{norm}
	\norm{u_h}^2_{1,h}:=\sum_{i=0}^N\norm{\nabla u_i}^2_{L^2(\Omega_i)}+\sum^N_{i=1}\sum_{j=0}^{i-1}\norm{\mathsf h_{ij}^{-\frac{1}{2}}\left[ u_h \right]}^2_{L^2(\Gamma_{ij})}.
\end{equation}

\begin{remark}
	Note that, by the choice of the hierarchy of the predomains made in Section~\ref{sec:notation} and the definition of the global stabilization operator~\eqref{global_stab_op}, the one-sided flux approximation is better than the symmetric average. It allows indeed to modify the weak formulation much less frequently.
\end{remark}

Our goal is to show that Problem~\eqref{stabilized_formulation} is well-posed in the sense of the following definition.

\begin{definition}\label{def_stability}
Problem~\eqref{stabilized_formulation} is stable if there exists $\overline\beta>0$, independent on how the domains overlap, such that for every $\beta\ge\overline\beta$ the bilinear form $\overline a_h(\cdot,\cdot)$ is bounded and coercive w.r.t. $\norm{\cdot}_{1,h}$.
\end{definition}	

\begin{theorem}
Problem~\eqref{stabilized_formulation} is stable in the sense of Definition~\ref{def_stability}.
\end{theorem}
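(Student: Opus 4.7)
My plan is to combine the standard symmetric Nitsche stability argument with the stabilized flux $R_h$ in place of the bare normal derivative. I would prove boundedness and coercivity of $\overline a_h$ on $V_h\cap H^1_{0,\Gamma_D}(\Omega)$ separately with respect to $\|\cdot\|_{1,h}$, and the central analytic ingredient for both is a global flux-trace inequality
\begin{equation*}
\sum_{i>j}\bigl\|\mathsf h_{ij}^{1/2}\langle R_h(v_h)\rangle_t\bigr\|^2_{L^2(\Gamma_{ij})} \;\le\; C_T^2 \sum_{i=0}^N\|\nabla v_i\|^2_{L^2(\Omega_i)},
\end{equation*}
which must hold uniformly in how the predomains are cut. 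This is precisely the point of replacing $\partial v_h/\partial n$ by the locally extended polynomial $R_h(v_h)$.

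To establish this inequality I would fix a pair $(i,j)$ and partition $\Gamma_{ij}$ into element pieces $\Gamma_{ij}\cap\overline K$ for the relevant $K$. On pieces coming from a good or uncut element $K\in\mathcal M_l^g$, $l\in\{i,j\}$, the contribution is controlled by the usual discrete trace inequality applied to the polynomial $\partial v_l/\partial n_l$, combined with Assumption~\ref{mesh_assumptions} and the equivalence $\mathsf h_{ij}^{1/2}\simeq \mathsf h_l^{1/2}$ from Remark~\ref{remarkh}. On pieces coming from a bad element $K\in\mathcal M_l^b$ I would invoke the stability property~\eqref{ineq_trace}, which bounds the stabilized flux on $\Gamma_{ij}\cap K$ by $\|\nabla v_h\|_{L^2(K'\cap\Omega_k)}$ for the associated good neighbor $K'\in\mathcal M_k$. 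Summing over element pieces yields the desired inequality, provided each good element $K'$ is used as a neighbor only a bounded number of times. Boundedness of $\overline a_h$ then follows at once: the volume and penalty parts are estimated by Cauchy--Schwarz in the $\|\cdot\|_{1,h}$ norm directly, while each of the two mixed interface terms is bounded by $\|\mathsf h_{ij}^{1/2}\langle R_h\rangle_t\|_{L^2(\Gamma_{ij})}\,\|\mathsf h_{ij}^{-1/2}[\cdot]\|_{L^2(\Gamma_{ij})}$ and summed via the flux-trace inequality above.

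For coercivity I would set $v_h=u_h$ in~\eqref{stab_bilinear_form}; the symmetric structure collapses the two mixed terms into $-2\sum_{i>j}\int_{\Gamma_{ij}}\langle R_h(u_h)\rangle_t[u_h]$, which I control by Young's inequality with parameter $\varepsilon>0$ as $\varepsilon\|\mathsf h_{ij}^{1/2}\langle R_h(u_h)\rangle_t\|^2 + \varepsilon^{-1}\|\mathsf h_{ij}^{-1/2}[u_h]\|^2$. Plugging in the global flux-trace inequality with constant $C_T$ and choosing $\varepsilon=1/(2C_T^2)$ absorbs half of $\sum\|\nabla u_i\|^2$ back into the leading volume term, and any $\overline\beta\ge 2C_T^2+\tfrac12$ then makes the remaining jump contribution strictly positive, giving $\overline a_h(u_h,u_h)\ge\tfrac12\|u_h\|^2_{1,h}$ for every $\beta\ge\overline\beta$. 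The main obstacle I anticipate is not this inequality chain itself but the combinatorial bookkeeping hidden in the flux-trace lemma: a single good element may be declared the good neighbor of several bad elements that lie on different meshes and across different interfaces $\Gamma_{ij}$, and this multiplicity has to be translated into an explicit constant depending only on $N_\Gamma$, consistent with the remark following the definition of $N_\Gamma$ that the stability constants degenerate as $N_\Gamma$ grows.
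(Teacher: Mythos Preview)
Your proposal is correct and follows essentially the same route as the paper: both boundedness and coercivity are obtained from the local stability property~\eqref{ineq_trace} of $R_h$ (together with Remark~\ref{remarkh}) to control the flux terms, and coercivity is closed by the same Young's inequality argument, yielding a threshold $\overline\beta$ that depends on $N_\Gamma$. The only cosmetic difference is that you package the flux estimate as a single global ``flux--trace'' lemma, whereas the paper carries out the same bound inline via the $\delta_{ij}$ counting in~\eqref{pass2}--\eqref{pass3}; your explicit remark about bounding the multiplicity of good neighbors is in fact a point the paper leaves implicit.
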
	

\begin{proof}
Let us start with continuity:
\begin{equation*}	
\begin{aligned}
&\abs{\overline{a}_h(u_h,v_h)}\le \underbrace{ \sum_{i=0}^N\norm{\nabla u_i}_{L^2(\Omega_i)}\norm{\nabla v_i}_{L^2(\Omega_i)}}_{\RN{1}}\\
&+\sum^N_{i=1}\sum_{j=0}^{i-1}\underbrace{\Big(\norm{\mathsf h_i^{\frac{1}{2}}\langle R_h(u_h)\rangle_t}_{L^2(\Gamma_{ij})}\norm{\mathsf h_{ij}^{-\frac{1}{2}}\left[ v_h\right]}_{L^2(\Gamma_{ij})}}_{\RN{2}}\\
& +\underbrace{\norm{\mathsf h_{ij}^{-\frac{1}{2}}\left[ u_h\right]}_{L^2(\Gamma_{ij})}\norm{\mathsf {h}_i^{\frac{1}{2}}\langle R_h(v_h)\rangle_t}_{L^2(\Gamma_{ij})}\Big)}_{\RN{3}}\\
& +\underbrace{ \beta\sum_{i=1}^N\sum_{j=0}^{i-1}\norm{\mathsf h_{ij}^{-\frac{1}{2}}\left[u_h\right]}_{L^2(\Gamma_{ij})}\norm{\mathsf h_{ij}^{-\frac{1}{2}}\left[v_h\right]}_{L^2(\Gamma_{ij})}}_{\RN{4}}.
\end{aligned}
\end{equation*}
In order to obtain $\RN{2},\RN{3}$ we have multiplied and divided by $\mathsf h_i^{\frac{1}{2}}$ and used Remark~\ref{remarkh}.

It is straightforward to bound $\RN{1}$ and $\RN{4}$. We focus on $\RN{2}$, whereas bounding $\RN{3}$ is analogous.
Using Assumption~\ref{assumption_uniformity} and the definition of the norm~\eqref{norm}, we have
\begin{equation}\label{pass1}
\RN{2}\le  t \sum_{i=1}^N\sum_{j=0}^{i-1}\norm{\mathsf h_i^{\frac{1}{2}} R_i(u_h)}_{L^2(\Gamma_{ij})} \norm{v_h}_{1,h}+C(1-t)\sum_{i=1}^N\sum_{j=0}^{i-1}\norm{\mathsf h_j^{\frac{1}{2}} R_j(u_h)}_{L^2(\Gamma_{ij})} \norm{v_h}_{1,h}.
\end{equation}
Note that
\begin{equation}\label{pass2}
\begin{aligned}
\sum_{i=1}^N\sum_{j=0}^{i-1}&\norm{\mathsf{h}_i^{\frac{1}{2}}R_i(u_h)}_{L^2(\Gamma_{ij})}\norm{v_h}_{1,h}
\le C\sum_{i=1}^N\sum_{j=0}^{i-1}\delta_{ij}\norm{\nabla u_h}_{L^2(\Omega_i)}\norm{v_h}_{1,h}\\
=&  C\sum_{i=1}^N\norm{\nabla u_h}_{L^2(\Omega_i)} \sum_{j=0}^{i-1}\delta_{ij} \norm{v_h}_{1,h}
\le C \max_{1\le i\le N}\sum_{j=0}^{i-1}\delta_{ij}\sum_{i=1}^N\norm{\nabla u_h}_{L^2(\Omega_i)}\norm{v_h}_{1,h}\\
\le&  C \sum_{i=1}^N\sum_{j=0}^{i-1}\delta_{ij}\sum_{i=1}^N\norm{\nabla u_h}_{L^2(\Omega_i)}\norm{v_h}_{1,h}
\le  CN_{\Gamma}\sum_{i=1}^N\norm{\nabla u_i}_{L^2(\Omega_i)}\norm{v_h}_{1,h},
\end{aligned}
\end{equation}
where $C$ depends on the constant of the stability property~\eqref{ineq_trace}. In a similar fashion it is possible to prove
\begin{equation}\label{pass3}
\sum_{i=1}^N\sum_{j=0}^{i-1}\norm{\mathsf{h}_j^{\frac{1}{2}}R_j(u_h)}_{L^2(\Gamma_{ij})}\norm{v_h}_{1,h}\le CN_{\Gamma}\sum_{i=0}^N\norm{\nabla u_i}_{L^2(\Omega_i)}\norm{v_h}_{1,h}.
\end{equation}
 We then put together~\eqref{pass1}, \eqref{pass2} and~\eqref{pass3} to obtain
\begin{equation}\label{eqcontbd}
\begin{aligned}
\RN{2}
\le C N_{\Gamma}\sum_{i=0}^N \norm{\nabla u_i}_{L^2(\Omega_i)}\norm{v_h}_{1,h}.
	\end{aligned}
\end{equation}
Hence, it holds $\abs{\overline{a}_h(u_h,v_h)}\le C\norm{u_h}_{1,h}\norm{v_h}_{1,h}$, where $C=\mathcal O (N_{\Gamma})$.

Now, let us move to coercivity,
\begin{equation}
\begin{aligned}
\overline{a}_h&(u_h,u_h) = \sum_{i=0}^N\norm{\nabla u_i}^2_{L^2(\Omega_i)}-2\sum_{i=1}^N\sum_{j=0}^{i-1}\int_{\Gamma_{ij}}\langle R_h(u_h)\rangle_t \left[ u_h \right] \\
&+ \beta \sum_{i=1}^N\sum_{j=0}^{i-1} \norm{\mathsf h_{ij}^{-1}\left[u_h \right]}^2_{L^2(\Gamma_{ij})}\\
\ge& \sum_{i=0}^N\norm{\nabla u_i}^2_{L^2(\Omega_i)} -\frac{1}{\alpha}\sum_{i=1}^N\sum_{j=0}^{i-1}\norm{\mathsf h_i^{\frac{1}{2}}\langle R_h(u_h)\rangle_t}^2_{L^2(\Gamma_{ij})}\\
&-\alpha\sum_{i=1}^N\sum_{j=0}^{i-1}\norm{\mathsf h_{ij}^{-\frac{1}{2}}\left[u_h\right]}^2_{L^2(\Gamma_{ij})}+ \beta \sum_{i=1}^N\sum_{j=0}^{i-1} \norm{\mathsf h_{ij}^{-1}\left[u_h \right]}^2_{L^2(\Gamma_{ij})}\\
\ge&\left( 1-\frac{C}{\alpha} \right)\sum_{i=0}^N\norm{\nabla u_i}^2_{L^2(\Omega_i)} +\left( \beta-\alpha \right)\sum_{i=1}^N\sum_{j=0}^{i-1} \norm{\mathsf h_{ij}^{-1}\left[u_h \right]}^2_{L^2(\Gamma_{ij})},
	\end{aligned}
\end{equation}
from which we deduce coercivity provided that $C<\alpha<\beta$. Again $C=\mathcal O (N_{\Gamma})$.
\end{proof}

\begin{theorem}[A priori error estimate]
Let $\frac{1}{2}< s\le p$, $u\in H^{s+1}(\Omega)$ be the solution of the continuous problem~\eqref{problem1} and $u_h\in V_h$ the solution of~\eqref{stabilized_formulation}. Then, there exists $C>0$, depending on $N_{\Gamma}$, such that
\begin{equation*}
\norm{u-u_h}_{1,h}\le C h^s \norm{u}_{H^{s+1}(\Omega)}.	
\end{equation*}	
\end{theorem}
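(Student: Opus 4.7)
The plan is to follow a Strang-type argument adapted to the non-standard bilinear form $\overline{a}_h$. I split the error as
$u - u_h = (u - \Pi_h u) + (\Pi_h u - u_h) =: \eta + e_h$,
where $\eta$ is bounded directly by the interpolation estimate from the preceding proposition, and $e_h\in V_h$ is a fully discrete function on which I can invoke the coercivity of $\overline{a}_h$ established in the preceding theorem (for $\beta\ge\overline\beta$).

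The heart of the argument is an estimate of $\overline{a}_h(e_h,v_h)$ for arbitrary $v_h\in V_h\cap H^1_{0,\Gamma_D}(\Omega)$. Since $\overline{a}_h(u_h,v_h)=F_h(v_h)$, this equals $\overline{a}_h(\Pi_h u,v_h)-F_h(v_h)$. The classical consistency of the continuous problem, obtained by integration by parts on each $\Omega_i$ together with $[u]=0$ and $\partial u_i/\partial n_i=-\partial u_j/\partial n_j$ on $\Gamma_{ij}$, gives
\begin{equation*}
\sum_{i=0}^N\int_{\Omega_i}\nabla u\cdot\nabla v_i \;-\; \sum_{i=1}^N\sum_{j=0}^{i-1}\int_{\Gamma_{ij}}\frac{\partial u}{\partial n_i}[v_h] \;=\; F_h(v_h).
\end{equation*}
Subtracting this identity from $\overline{a}_h(\Pi_h u,v_h)$ produces four remainder terms that I bound separately: (i) the bulk contribution $\int_{\Omega_i}\nabla\eta\cdot\nabla v_i$, handled by Cauchy-Schwarz and the definition of $\|\cdot\|_{1,h}$; (ii) the non-symmetric Nitsche term involving $\langle R_h(\Pi_h u)\rangle_t-\partial u/\partial n_i$, which is controlled element-by-element by the approximation property~\eqref{ineq_approx} of the stabilization operator and then summed using Assumption~\ref{assumption_uniformity} and the finite-overlap bound encoded in $N_\Gamma$; (iii) the symmetrization term, rewritten as $-[\eta]\langle R_h(v_h)\rangle_t$ because $[u]=0$, and bounded by $\|\mathsf h_{ij}^{-1/2}[\eta]\|_{L^2(\Gamma_{ij})}$ times $\|\mathsf h_i^{1/2}\langle R_h(v_h)\rangle_t\|_{L^2(\Gamma_{ij})}$, the second factor being handled by the stability property~\eqref{ineq_trace} and the same summation manipulation leading to~\eqref{eqcontbd}; and (iv) the penalty term $\beta\mathsf h_{ij}^{-1}[\eta][v_h]$, bounded as in part $\RN{4}$ of the continuity argument.

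Collecting everything yields $\overline{a}_h(e_h,v_h)\le C(N_\Gamma)\bigl(\|\eta\|_{1,h}+h^s\|u\|_{H^{s+1}(\Omega)}\bigr)\|v_h\|_{1,h}$. Testing with $v_h=e_h$, invoking coercivity, and applying the interpolation bound $\|\eta\|_{1,h}\le Ch^s\|u\|_{H^{s+1}(\Omega)}$, I deduce $\|e_h\|_{1,h}\le Ch^s\|u\|_{H^{s+1}(\Omega)}$; the triangle inequality then closes the argument. The main obstacle is conceptual rather than computational: because $R_h$ is defined only on discrete functions, one cannot literally write down a clean Galerkin orthogonality by subtracting $\overline{a}_h(u,v_h)$. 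One must instead carefully add and subtract the classical flux $\partial u/\partial n_i$, and verify that the resulting discrepancy between $R_h(\Pi_h u)$ and $\partial u/\partial n_i$ is precisely what the approximation property~\eqref{ineq_approx} is designed to measure. Once this bookkeeping is in place, the remainder of the proof follows the standard Nitsche-Strang machinery adapted to the overlapping multi-patch setting.
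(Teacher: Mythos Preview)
Your proposal is correct and follows essentially the same Strang-type argument as the paper: the same splitting via $\Pi_h u$, the same use of coercivity on the discrete part, the same consistency identity (the paper phrases it as $\overline{a}_h(v_h,w_h)-a_h(u,w_h)$ using the earlier consistency proposition, while you write out the integration-by-parts identity directly), and the same four-term decomposition handled by the stability property~\eqref{ineq_trace} via~\eqref{eqcontbd} and the approximation property~\eqref{ineq_approx}. The only cosmetic difference is that the paper keeps $v_h$ generic until the final line before specializing to $\Pi_h u$, whereas you fix the interpolant from the outset.
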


\begin{proof}
Let $u$ be the solution of the strong problem~\eqref{problem1} and $a_h(\cdot,\cdot)$ the bilinear form defined in~\eqref{bilinear_form}. Let us start with the triangular inequality and coercivity of $\overline{a}_h(\cdot,\cdot)$:
\begin{equation}\label{first_passage}
\begin{aligned}
\norm{u-u_h}_{1,h}\le&\norm{u-v_h}_{1,h}+\norm{v_h-u_h}_{1,h}\\
\le&\norm{u-v_h}_{1,h}+\alpha\sup_{\substack{w_h\in V_h\\ w_h\ne 0}}\frac{\overline{a}_h(v_h-u_h,w_h)}{\norm{w_h}_{1,h}},
\end{aligned}
\end{equation}
where $\alpha>0$ is the coercivity constant. Recalling that $u$ solves~\eqref{non_stabilized_formulation} and $u_h$ solves~\eqref{stabilized_formulation}. After properly rearranging the terms, we have
\begin{equation}
\begin{aligned}
\overline{a}_h&(v_h-u_h,w_h)=\overline{a}_h(v_h,w_h)-\overline{a}_h(u_h,w_h)
 =\overline{a}_h(v_h,w_h)-a_h(u,w_h)\\
=&\sum_{i=0}^N \int_{\Omega_i}\nabla(v_i-u_i)\cdot\nabla w_i \\
&- \sum_{i=1}^N\sum_{j=0}^{i-1}\int_{\Gamma_{ij}}\left( \langle R_h(v_h) - \pderiv{u}{n_i} \rangle_t \left[ w_h \right] + \langle R_h(w_h) \rangle_t \left[ v_h \right] -\langle \pderiv{w_h}{n_i} \rangle_t \left[ u\right]  \right) \\
&+ \beta \sum_{i=1}^N\sum_{j=0}^{i-1}\int_{\Gamma_{ij}}\mathsf h_{ij}^{-1}\left[ v_h-u\right]\left[ w_h\right] .
\end{aligned}
\end{equation}
Reminding that $\left[u\right]=0$, we have
\begin{equation*}
\int_{\Gamma_{ij}}\langle R_h(w_h) \rangle_t \left[ v_h \right]
=\int_{\Gamma_{ij}}\langle R_h(w_h)\rangle_t \left[v_h-u\right].
\end{equation*}
Hence
\begin{equation}
\begin{aligned}
\overline{a}_h(v_h-u_h,w_h)=&\underbrace{\sum_{i=0}^N \int_{\Omega_i}\nabla(v_i-u_i)\cdot\nabla w_i}_{\RN{1}} \\
&-\underbrace{\sum_{i=1}^N\sum_{j=0}^{i-1}\int_{\Gamma_{ij}}\left( \langle R_h(v_h) - \pderiv{u}{n_i} \rangle_t \left[ w_h \right] + \langle R_h(w_h) \rangle_t \left[ u- v_h \right] \right)}_{\RN{2}\quad \text{\&}\quad\RN{3}}  \\
&+ \underbrace{\beta \sum_{i=1}^N\sum_{j=0}^{i-1}\int_{\Gamma_{ij}}\mathsf h_{ij}^{-1}\left[ v_h-u\right]\left[ w_h\right]}_{\RN{4}} 
\end{aligned}
\end{equation}
First of all, we note that 
\begin{equation}
\RN{1} + \RN{4} \le C \norm{v_h-u}_{1,h}\norm{w_h}_{1,h}.
\end{equation}
Then, in order to bound $\RN{3}$, we multiply and divide by $\mathsf h_{ij}^{-\frac{1}{2}}$ and use Remark~\ref{remarkh} and inequality~\eqref{eqcontbd}.
\begin{equation}
\begin{aligned}
\RN{3} \le & \sum_{i=1}^N\sum_{j=0}^{i-1}\norm{\mathsf h_i^{\frac{1}{2}} \langle R_h(w_h)\rangle_t}_{L^2(\Gamma_{ij})}   \norm{ \mathsf h_{ij}^{-\frac{1}{2}}\left[ u-v_h\right]}_{L^2(\Gamma_{ij})}\\
\le& C N_{\Gamma}  \left(\sum_{i=0}^N\norm{\nabla w_h}_{L^2(\Omega_i)}\right) \norm{u-v_h}_{1,h}\le C  N_{\Gamma} \norm{w_h}_{1,h}\norm{u-v_h}_{1,h}
\end{aligned}
\end{equation}
Similarly, we have
\begin{equation}
\begin{aligned}
\RN{2} \le &\sum_{i=1}^N\sum_{j=0}^{i-1} \norm{\mathsf h_i^{\frac{1}{2}} \langle R_h (v_h)-\pderiv {u}{n_i} \rangle_t}_{L^2(\Gamma_{ij})} \norm{\mathsf h_{ij}^{-\frac{1}{2}}\left[w_h \right]}_{L^2(\Gamma_{ij})}\\
\le &\norm{\mathsf h_i^{\frac{1}{2}}\left(R_h(v_h)-\pderiv{u}{n_i}\right)}_{L^2(\Gamma_{ij})}\norm{w_h}_{1,h} .
\end{aligned}
\end{equation}
Finally, we choose $v_h=\Pi (u)$, and by the approximation properties of $\Pi_h$ and $R_h$, namely~\eqref{ineq_approx}, we obtain
\begin{equation*}
\RN{1} + \RN{2}  + \RN{3} + \RN{4}\le C h^s\norm{u}_{H^{s+1}\left(\Omega\right)}\norm{w_h}_{1,h}.
\end{equation*}
\end{proof}

\section{Implementation Aspects of the Union Operation}
\label{sec:impl}

In this section, we discuss the implementation of the union operation. Given two patches $\Omega_0^{\ast}$ and $\Omega_1^{\ast}$, recall that we create their union by first trimming $\Omega_0^{\ast}$ with $\Omega_1^{\ast}$, and then weakly coupling the remaining (or active) region of $\Omega_0:=\Omega_0^{\ast}\backslash \Omega_1^{\ast}$ with $\Omega_1^{\ast}$ through their interface. While the integration on cut elements has been discussed in \cite{ref:antolin19a} (see also \cite{ref:rank12, ref:ruess14, ref:kudela16}), here we focus on dealing with interfaces, which includes creating a quadrature mesh for each interface as well as stabilizing bad cut elements that are adjacent to the union interface. In what follows, we explain the related algorithms in 2D and will also comment on the extension to 3D.

\subsection{Generation of the Interface Quadrature Mesh}
\label{sec:quad}

The key to creating an interface quadrature mesh is to find mesh intersections on the interface; see a 2D example in Figure \ref{fig:intersect2d}. Each visible interface is shared by two patches, one on the top and the other on the bottom. Recall that we denote the interface, the top patch and the bottom patch as $\Gamma_{ij}$, $\Omega_i$ and $\Omega_j$ ($i>j$), respectively. According to our construction of unions,  $\Gamma_{ij}$ is always part of the boundary of $\Omega_i$, so its geometric mapping is the same as that of $\Omega_i$ and it naturally has the mesh information of $\Omega_i$. Now the aim is to find out how the mesh of $\Omega_j$ intersects with $\Gamma_{ij}$.


\begin{figure}[htp]
\centering
\includegraphics[width=0.8\linewidth]{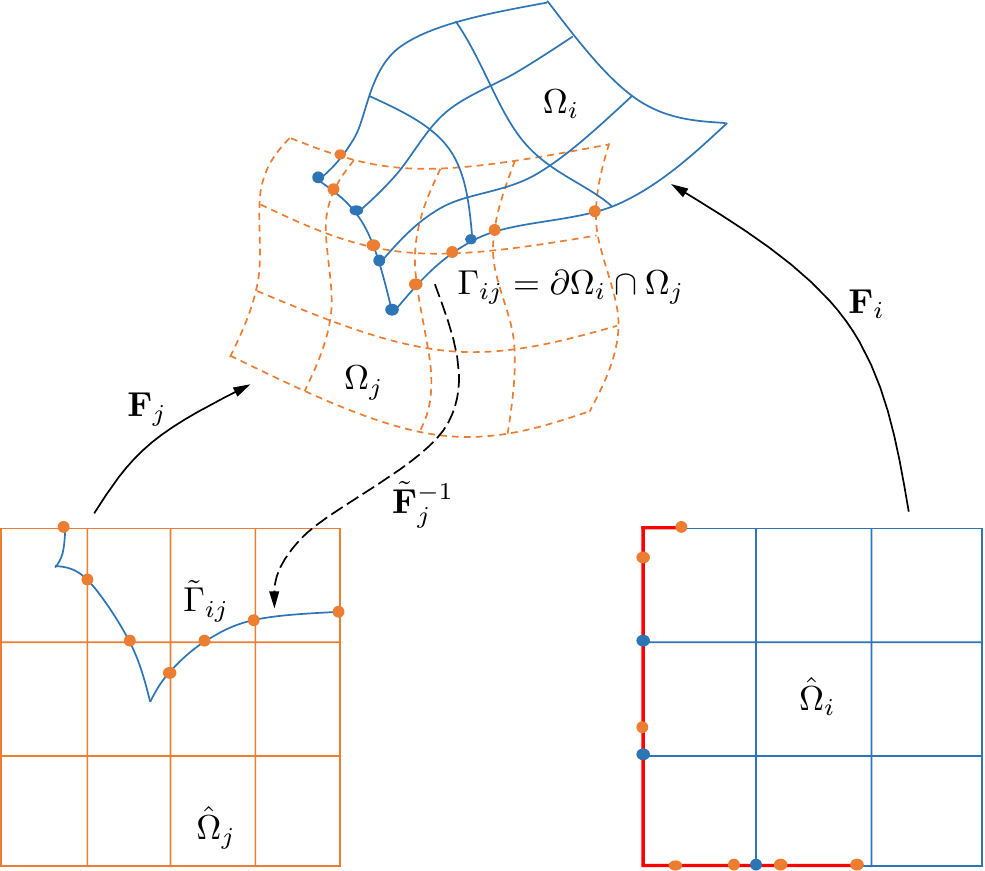}
\caption{Mesh intersections of a top patch (blue lines) and a bottom patch (orange lines) on their interface. The mesh intersections are marked in blue and orange dots. The parametric domain of the interface is marked as red lines.}
\label{fig:intersect2d}
\end{figure}

We find these intersections primarily in the parametric domain $\hat{\Omega}_j$ of $\Omega_j$ following three steps. First, we obtain an approximate preimage of $\Gamma_{ij}$ with respect to $\hat{\Omega}_j$ through the inversion algorithm \cite{ref:nurbsbook}, i.e., $\tilde{\Gamma}_{ij} := \tilde{\mathbf{F}}_j^{-1}(\Gamma_{ij})$, where the tilde indicates the approximation nature of the inversion algorithm.  Second, in $\hat{\Omega}_j$ we find the intersections of $\tilde{\Gamma}_{ij}$ with the axis-aligned knot lines of $\hat{\Omega}_j$, which is easier compared to the general curve-curve intersection; see the bottom left figure in Figure \ref{fig:intersect2d}. Third, we map these intersections to the physical domain through $\mathbf{F}_j$. However, the resulting points generally do not lie on $\Gamma_{ij}$, again due to the approximation of the inversion algorithm. Therefore, we further project these points onto $\Gamma_{ij}$ to get the final intersections; see orange and blue dots in Figure \ref{fig:intersect2d}. 

Now with all the intersections, we are ready to create the interface quadrature mesh and to compute the interface integral. We here emphasize two aspects that can improve the computation in terms of accuracy and efficiency. First, as the Nitsche's method needs the normal information of $\Gamma_{ij}$, we compute it using $\Omega_i$ where the geometric information is exact. In contrast, if we compute it using $\Omega_j$, we will lose accuracy because it relies on $\tilde{\Gamma}_{ij}$, which is only an approximation. Second, when evaluating basis functions from $\Omega_j$ in the interface integral, it involves to find corresponding quadrature points in $\hat{\Omega}_j$ through the approximate inverse mapping $\tilde{\mathbf{F}}_j^{-1}$, so there is an unnecessary coupling of the geometric operation (i.e., the inversion operation) and analysis. Instead, we precompute and store those quadrature points in $\hat{\Omega}_j$ such that later analysis can be performed without repeatedly appealing to the inversion operation.

\begin{remark} 
We use OpenCASCADE \cite{ref:opencascade}, an open source CAD system, to perform surface geometric operations, including creating the union of multiple spline patches and finding the mesh intersections on each interface. To the authors' knowledge, the default geometric tolerance in OpenCASCADE is around $10^{-8}$ in trimming-related operations and cannot be further reduced. Its influence will be seen in a numerical example in Section \ref{sec:result}, where the geometric error induced by this tolerance begins to dominate once the approximation error (in $L^2$ norm) reaches $10^{-8}$. 
\end{remark}


\subsection{Implementation of Minimal Stabilization}
\label{sec:stab}

The minimal stabilization method introduced in Section \ref{sec:ministab} mainly needs to: (1) find a list of bad-to-good element pairs, and (2) replace the basis functions of each bad element with extended polynomials from its good neighbor. In the following, we explain the procedure in 2D terminologies but extension to 3D is straightforward.

Simply speaking, a bad element is a cut element with ``small" effective area on which we have to compute fluxes, i.e., normal derivatives at its boundary. All the other active elements are good elements. In practice, we compute element areas in the parametric domain and use a given threshold to identify bad elements, which serves as an approximate criterion to Definition \eqref{def_goodandbad}. Then we follow the procedure described in Section \ref{sec:ministab}.


Next, we take a look at the interface integral that contributes to the stiffness matrix. Let $\tau(\Gamma_{ij})$ denote the quadrature mesh of the interface $\Gamma_{ij}$, and $e\in\tau(\Gamma_{ij})$ be an quadrature element. The two adjacent elements to $e$ are denoted as $K_i^e\in\Omega_i$ and $K_j^e\in\Omega_j$. The index set of basis functions $B_{\bm{k},i}$ with support on $K_i^e$ is denoted as $I_{K_i^e}$; similarly, $I_{K_j^e}$ corresponds to $K_j^e$. Note that we neglect the degree information in the notation of basis functions as it is fixed once the patch index $i$ or $j$ is given. We are particularly interested in the terms involving normal derivatives, and such a term takes the following form when the one-sided flux from the top patch $\Omega_i$ is used,
\begin{equation}
\int_{e} \pd{B_{\bm{k},i}}{n_i} \, ( B_{\bm{l},i} - B_{\bm{m},j} ),
\label{eq:intf_impl_0}
\end{equation}
where $\bm{k}, \bm{l}\in I_{K_i^e}$, and $\bm{m}\in I_{K_j^e}$. The stability issue originates from $\partial B_{\bm{k},i} / \partial n_i$ if $K_i^e$ is badly cut; otherwise Equation \eqref{eq:intf_impl_0} contributes to the matrix entries corresponding to the indices $(\bm{k},\bm{l})$ and $(\bm{k},\bm{m})$. 

In the following, we focus on the case that $K_i^e$ is badly cut. The minimal stabilization consists in replacing $\partial B_{\bm{k},i} / \partial n_i$ with a stabilized version that involves function extension from the good neighbor $(K_i^e)'$ of $K_i^e$. In other words, we need to extend basis functions defined on $(K_i^e)'$ to $K_i^e$ and use the extended functions to evaluate the involved normal derivatives. Specifically, we follow three steps. First, we find the Cartesian bounding box $(K_i^e)_b'$ of $(K_i^e)'$ in the physical domain and define on it a set of bi-degree-$p$ Bernstein polynomials $\mathsf{b}_{l}(\bm x)$, where $l\in\{1,2,\ldots,(p+1)^2\}$, $\bm{x}\in (K_i^e)_b'$, and $p$ is the highest degree in $(K_i^e)_b'$. Second, let $I_{(K_i^e)'}$ be the index set of basis functions with support on $(K_i^e)'$, and we compute a $L^2$ projection of each $B_{\bm{k}',i}$ ($\bm{k}'\in I_{(K_i^e)'}$) using these Bernstein polynomials. As a result, we have
\begin{equation*}
P_{(K_i^e)'}(B_{\bm{k}',i}(\bm x)) = \sum_{l=1}^{(p+1)^2} c_{\bm{k}' l}\, \mathsf{b}_{l}(\bm x), \quad \bm{x}\in (K_i^e)_b',
\end{equation*}
where $P_{(K_i^e)'}$ stands for the $L^2$-orthogonal projection onto $\mathbb{Q}_p\left((K_i^e)'\right)$. Note that $c_{\bm{k}' l}\in\mathbb{R}$ are obtained by solving a local system of linear equations $\sum_{l=1}^{(p+1)^2} M_{ml} c_{\bm{k}' l} = F_m$ for $m=1,\ldots,(p+1)^2$, where
\begin{equation*}
M_{ml}  = \int_{(K_i^e)'} \mathsf{b}_{m} \, \mathsf{b}_{l}  \quad\text{and}\quad F_m = \int_{(K_i^e)'} B_{\bm{k}',i} \, \mathsf{b}_{m} . 
\end{equation*}
Third, we define the extension of $B_{\bm{k}',i}$ to be $P_{(K_i^e)'}(B_{\bm{k}',i})$ and enlarge the definition domain of the Bernstein polynomials by including the bounding box $(K_i^e)_b$ of the bad element $K_i^e$ as well, that is,
\begin{equation*}
\mathcal{E}(P_{(K_i^e)'}(B_{\bm{k}',i}(\bm x))) := \sum_{l=1}^{(p+1)^2} c_{\bm{k}' l} \mathsf{b}_{l}(\bm x), \quad \bm{x}\in (K_i^e)_b' \cup (K_i^e)_b.
\end{equation*}
Finally, the stabilized interface integral corresponding to Equation \eqref{eq:intf_impl_0} becomes
\begin{equation*}
\int_{e} \pd{\mathcal{E}(P_{(K_i^e)'}(B_{\bm{k}',i}))}{n_i} \, ( B_{\bm{l},i} - B_{\bm{m},j} ) ,
\label{eq:intf_impl_stab}
\end{equation*}
where recall that $\bm{k}'\in I_{(K_i^e)'}$, $\bm{l}\in I_{K_i^e}$, and $\bm{m}\in I_{K_j^e}$. Therefore, when $K_i^e$ is a bad element, it is Equation \eqref{eq:intf_impl_stab} rather than Equation \eqref{eq:intf_impl_0} that contributes to the stiffness matrix. Particularly, it contributes to the entries corresponding to the indices $(\bm{k}',\bm{l})$ and $(\bm{k}',\bm{m})$.

We have discussed the stabilization with the one-sided flux. Following a similar procedure, we can obtain the stabilization with the symmetric average flux as well, which, however, generally requires to stabilize more elements. More specifically, with the symmetric average flux, the term we need to stabilize becomes
\begin{equation*}
\int_{e} \frac{1}{2} \left( \pd{B_{\bm{k},i}}{n_i} +  \pd{B_{\bm{n},j}}{n_i} \right) \, ( B_{\bm{l},i}-B_{\bm{m},j} ) ,
\end{equation*}
where $\bm{n}\in I_{K_j^e}$ and the integral involves normal derivatives from both patches. Therefore, in addition to $\partial B_{\bm{k},i} / \partial n_i$, the flux $\partial B_{\bm{n},j} / \partial n_i$ also needs stabilization if $K_j^e$ is badly cut. 

\begin{remark}
We have implemented the stabilized union operation on top of igatools \cite{ref:igatools}, an open source isogeometric library written in C++, also with the help of OpenCASCADE \cite{ref:opencascade} and IRIT \cite{ref:irit} for the geometric operations.
\end{remark}

\begin{remark}
Conditioning is another important issue related to trimming. Guarantee of stability does not necessarily imply a well-conditioned stiffness matrix due to the presence of cut basis functions \cite{ref:buffa19}. A proper preconditioner is needed to ensure a reliable solution. The simple diagonal scaling preconditioner was used in our previous work  \cite{ref:antolin19a} on isogeometric V-rep, where the numerical results showed that it works in solving the Poisson's equation and linear elasticity problems. In this work, we also use this simple technique to deal with the conditioning issue. Another alternative is a recent work on the multigrid preconditioner \cite{ref:prenter19}, which can deliver cut-element independent convergence rates in the context of immersed isogeometric analysis. However, further investigation, especially theoretically, is needed to advance our knowledge on this challenging issue.
\end{remark}

\section{Numerical Examples}
\label{sec:result}

\begin{figure}[htb]
\centering
\begin{tabular}{cc}
\includegraphics[width=5cm]{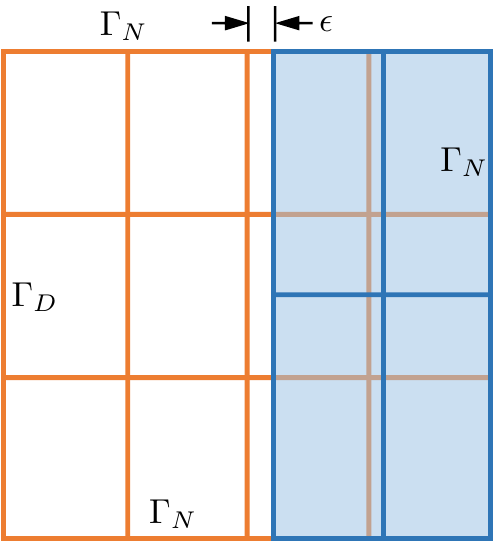} &
\includegraphics[width=5.5cm]{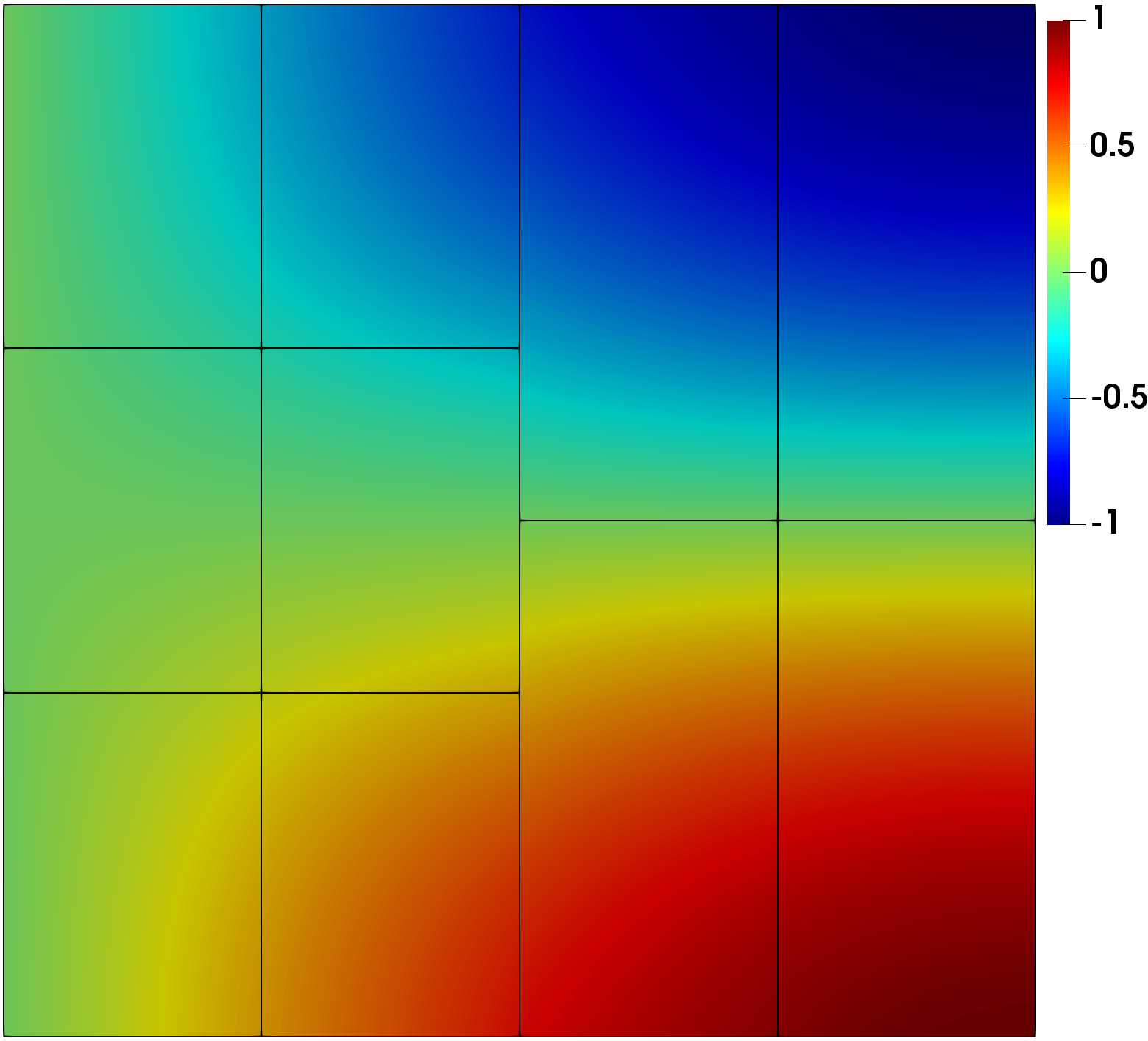} \\
(a) & (b)\\
\includegraphics[width=0.45\linewidth]{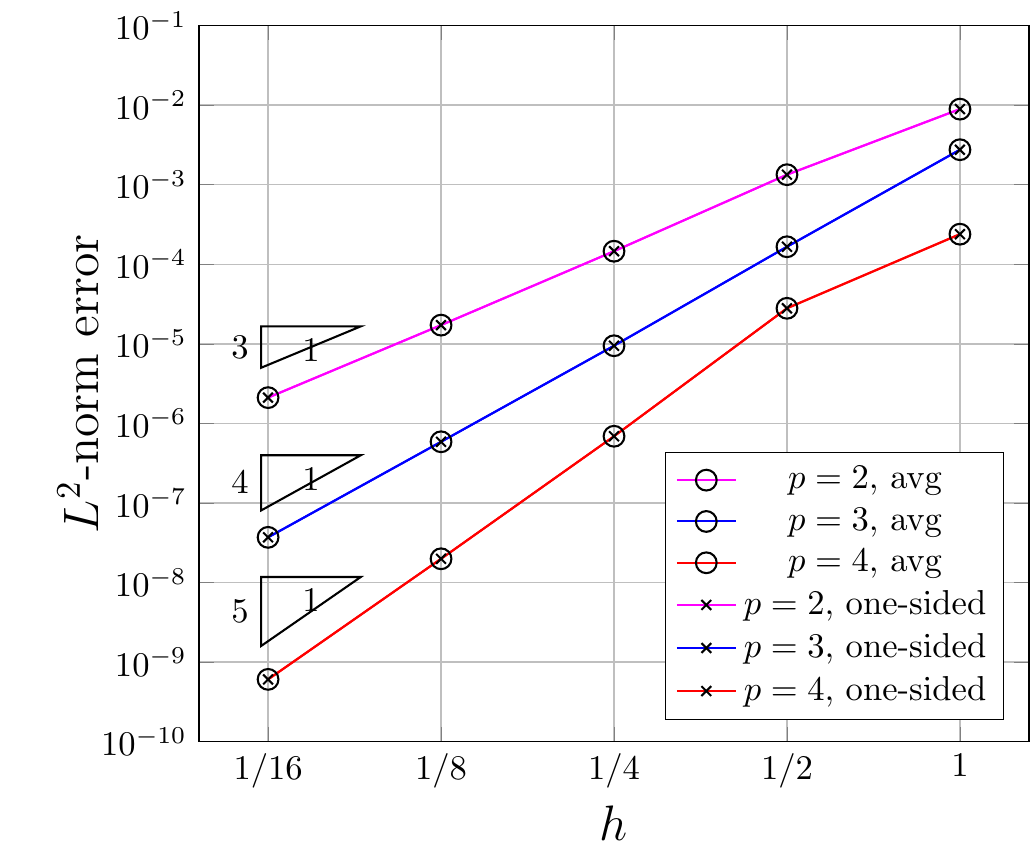} &
\includegraphics[width=0.45\linewidth]{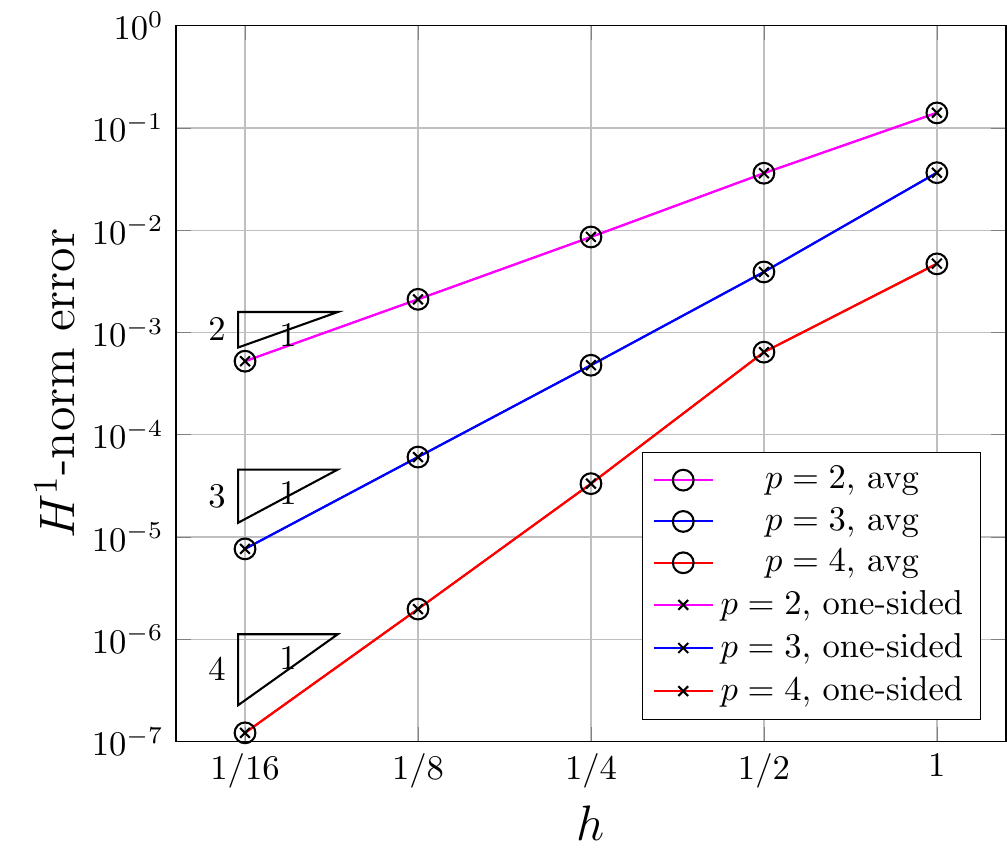} \\
(c) & (d)
\end{tabular}
\caption{The unit square example by a two-patch union. (a) Meshes of the two patches (orange and blue lines) as well as Dirichlet ($\Gamma_D$) and Neumann ($\Gamma_N$) boundary conditions, (b) the solution field on the input mesh in (a) using the quadratic basis, (c) the convergence plot in the $L^2$-norm error, and (d) the convergence plot in the $H^1$-norm error. $\epsilon$ in (b, c, d) is set to be $10^{-6}$.}
\label{fig:ex_square}
\end{figure}

In this section, we present three examples to demonstrate the convergence and conditioning by solving the Poisson's equation on various domains obtained through the union operation. We then show the geometric flexibility of our proposed method by solving the linear elasticity problem on a more complex 2D geometry. In all the numerical tests, we set the penalty parameter $\beta$ in Nitsche's formulation as $6p^2$, where $p$ is the degree of the spline discretization. The area-ratio threshold is set to be $10\%$ to identify bad elements.

\subsection{Convergence and Conditioning Under Bad Trimming}
We start with a two-patch union that forms a unit square. This simple test is meant to show that the minimal stabilization works robustly even when there are extremely small cut elements involved. As shown in Figure \ref{fig:ex_square}(a), the bottom patch ($\Omega_0^{\ast}$) is a unit square $[0,1]^2$ with a $4\times 3$ mesh (orange lines), whereas the top patch ($\Omega_1^{\ast}$) covers the region $[0.5+\epsilon,1]\times [0,1]$ with a $2\times 2$ mesh (blue lines). The parameter $\epsilon\in(0,10^{-2})$ controls trimming of the bottom patch, or equivalently, $\epsilon$ is the width of the cut elements in the bottom patch. Both patches are B-spline patches. Particularly, we set $\epsilon=10^{-6}$ to perform a convergence study with bases of degrees 2, 3 and 4 in all the patches. We consider the manufactured solution: $u(x,y) = \sin(\pi x/2) \cos(\pi y),\ (x,y)\in [0,1]^2$, with homogeneous Dirichlet and Neumann boundary conditions imposed according to Figure \ref{fig:ex_square}(a). Figure \ref{fig:ex_square}(b) shows the solution obtained on the input mesh using the quadratic basis, where the cut elements are invisible due to their small scale. The convergence plots in $L^2$- and $H^1$-norm error are shown in Figure \ref{fig:ex_square}(c, d), where we observe expected optimal convergence rates in both norms. 

Note that we have used two types of fluxes $\left<\pd{v_h}{n}\right>_t$ ($t=\frac{1}{2},1$) to show that they behave almost the same in terms of convergence and conditioning. We should also note that the symmetric average flux involves the flux from the bad cut elements and thus needs to be stabilized through the minimal stabilization method. On the other hand, the one-sided flux comes from a non-cut domain so it does not need stabilization. Indeed, we observe in Figure \ref{fig:ex_square} that the convergence curves are indistinguishable using both types of fluxes.

Next, we study the conditioning of global stiffness matrices in three cases: (1) the symmetric average flux without stabilization, (2) the symmetric average flux with stabilization, and (3) the one-sided flux (no need for stabilization). We compute the condition number of the rescaled stiffness matrix $D_s^{-1/2} K_s D_s^{-1/2}$, where $K_s$ is the stiffness matrix and $D_s$ denotes $\mathrm{diag}(K_s)$.

\begin{figure}[htb]
\centering
\begin{tabular}{ccc}
\includegraphics[width=0.3\linewidth]{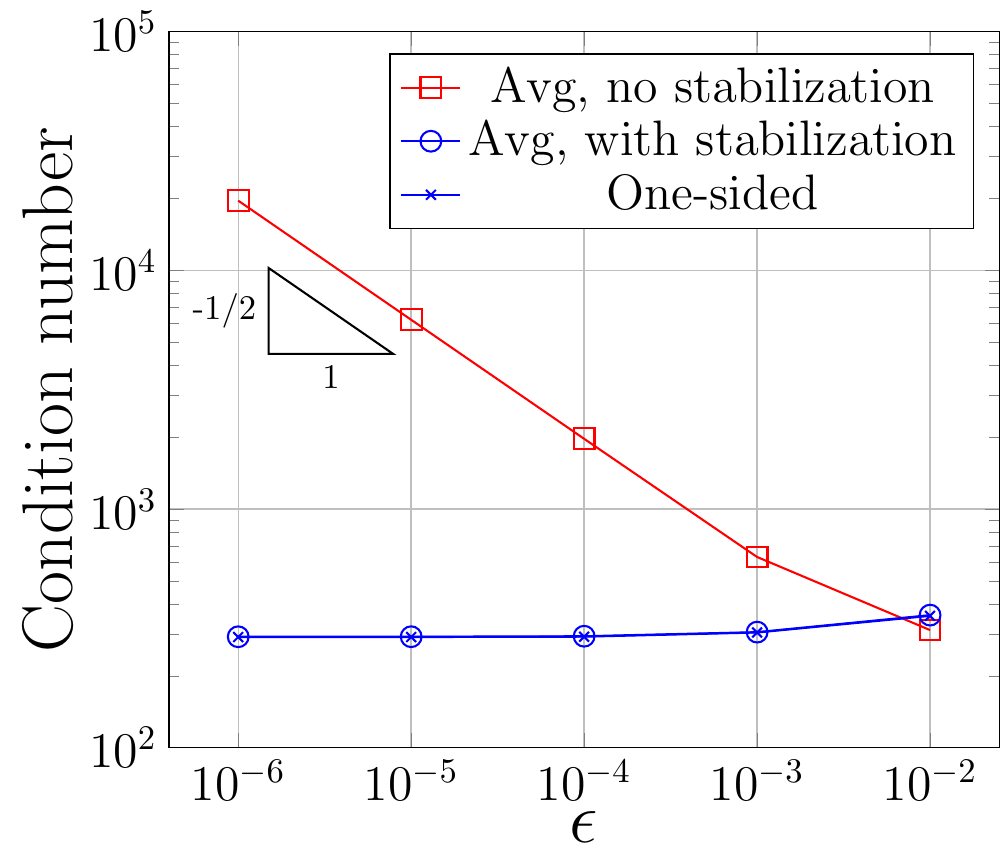} &
\includegraphics[width=0.3\linewidth]{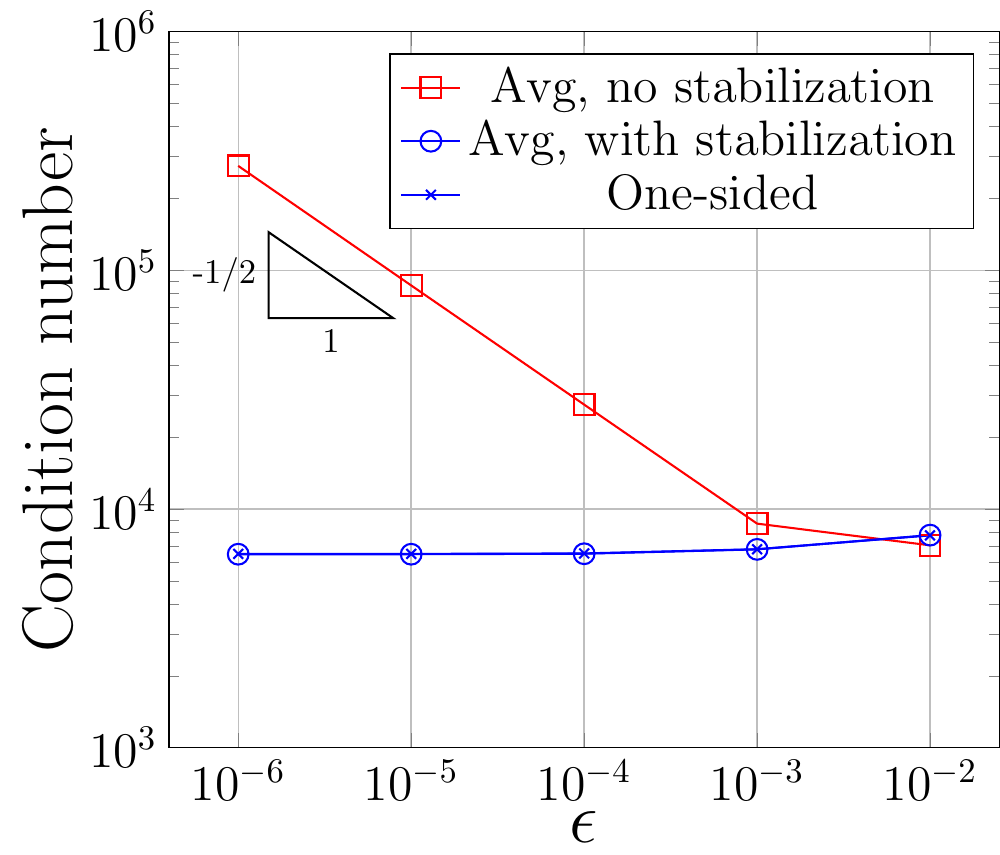} &
\includegraphics[width=0.3\linewidth]{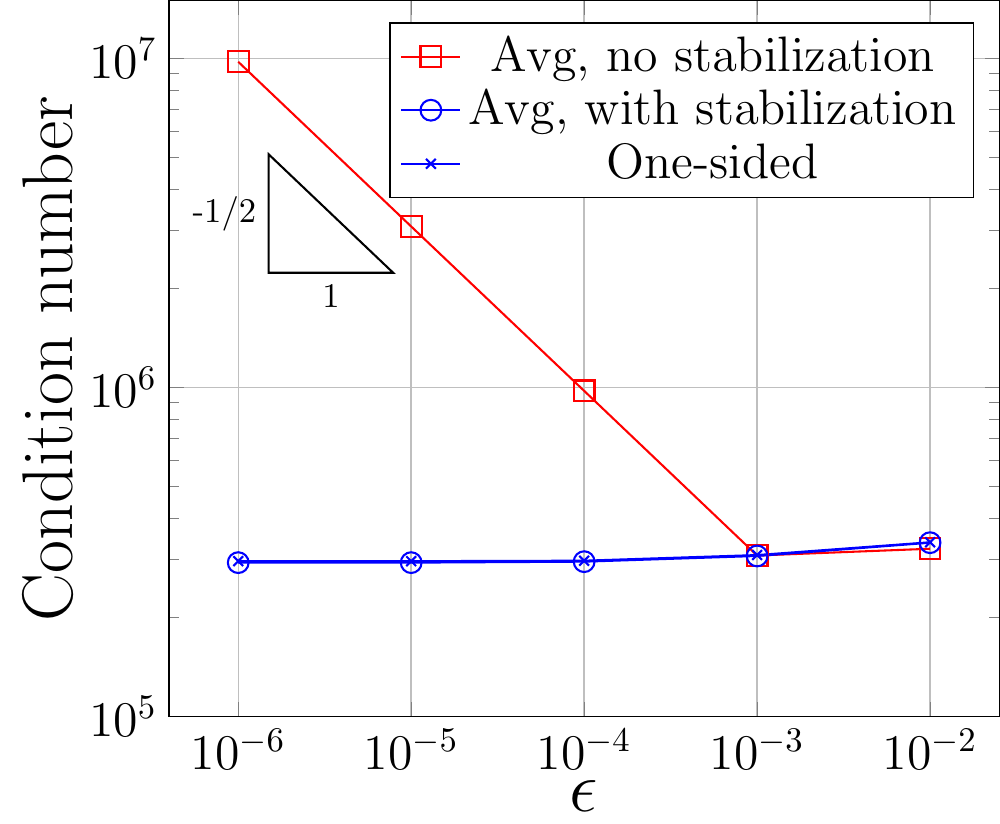} \\
(a) $p=2$ & (b) $p=3$ & (c) $p=4$ \\
\end{tabular}
\caption{Condition numbers with respect to the width of cut elements, $\epsilon$, see Figure \ref{fig:ex_square}(a). The condition numbers are obtained using bases of degree 2, 3 and 4 on the input mesh shown in Figure \ref{fig:ex_square}(a).}
\label{fig:square_cond_eps}
\end{figure}

We test both the influence of $\epsilon$ on a fixed mesh and the influence of the mesh size $h$ with a fixed small $\epsilon$. First, given the input mesh shown in Figure \ref{fig:ex_square}(a) and bases of different degrees (2, 3 and 4), we compute their corresponding condition numbers changing $\epsilon$ from $10^{-2}$ down to $10^{-6}$. The result is summarized in Figure \ref{fig:square_cond_eps}. We observe that in Cases (2) and (3), the condition number is independent from trimming, that is, it almost remains constant as $\epsilon$ decreases. Moreover, the condition numbers obtained in these two cases are indistinguishable. In contrast, the condition number in Case (1) increases exponentially in the power of $1/2$ (see \cite{DEPRENTER2017297} for a more involved discussion about the dependence of the condition number on $\eps$).

\begin{figure}[htb]
\centering
\begin{tabular}{ccc}
\includegraphics[width=0.3\linewidth]{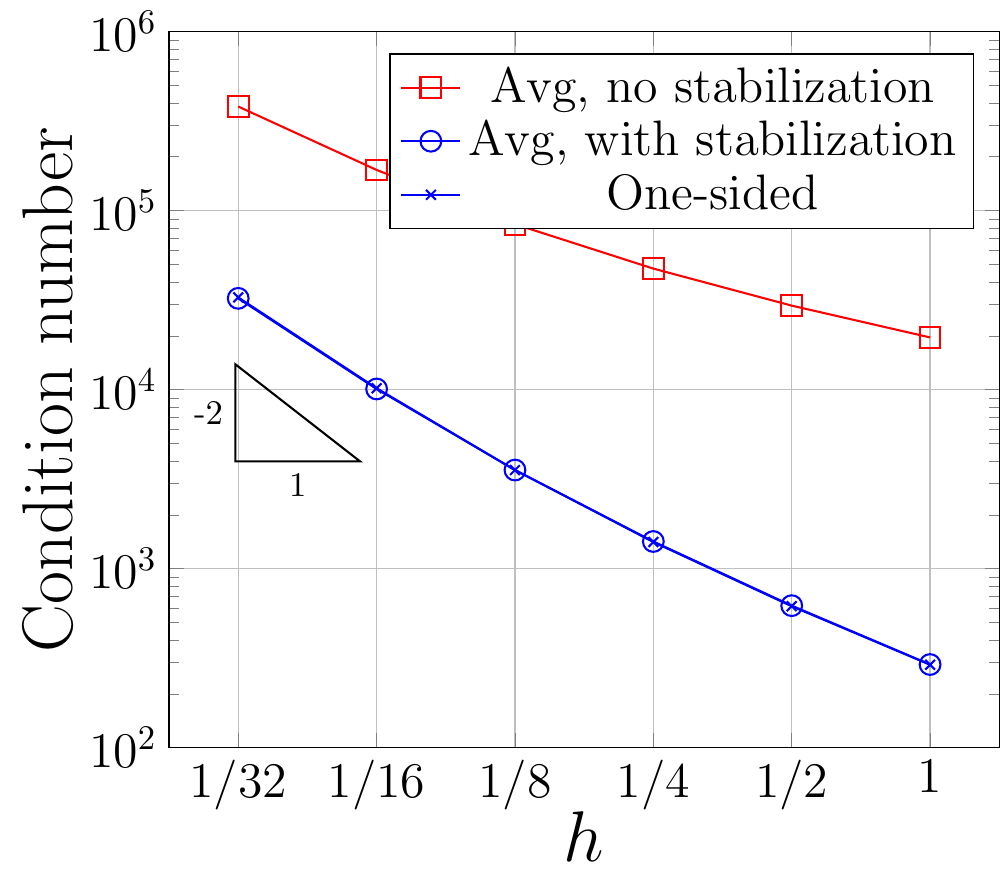} &
\includegraphics[width=0.3\linewidth]{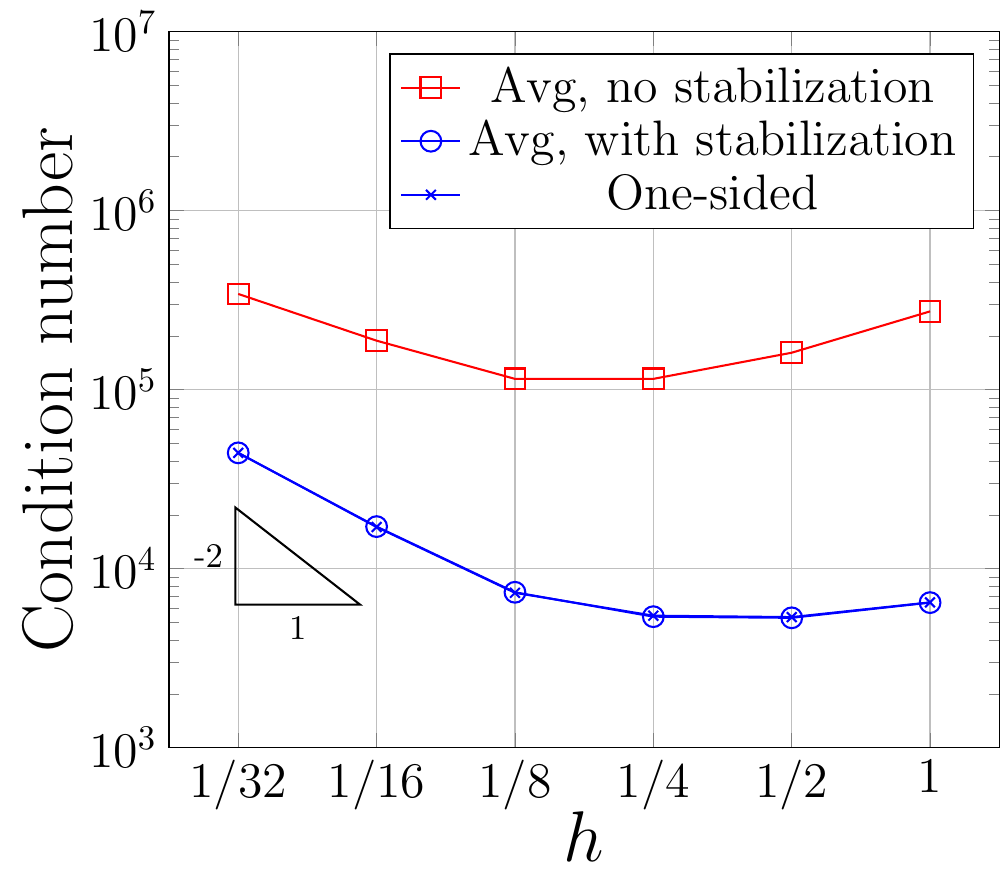} &
\includegraphics[width=0.3\linewidth]{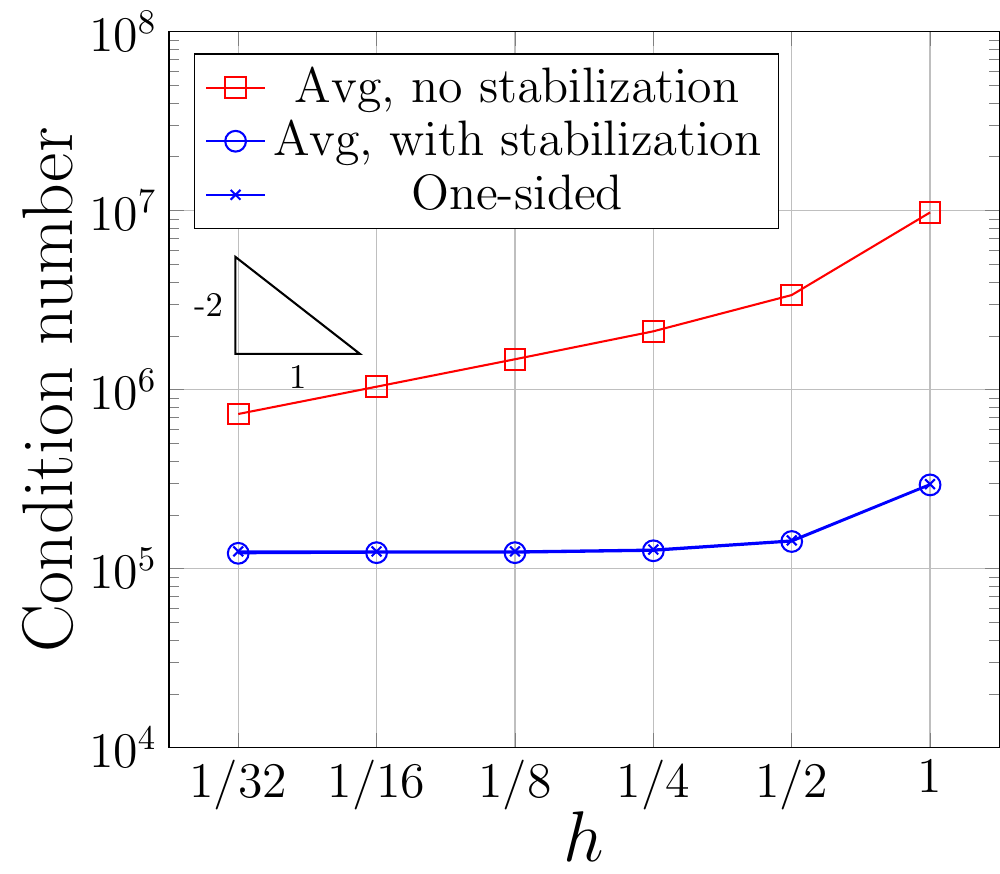} \\
(a) $p=2$ & (b) $p=3$ & (c) $p=4$ \\
\end{tabular}
\caption{Condition numbers with respect to the mesh size indicator $h$. The condition numbers are obtained using bases of degree 2, 3 and 4 on a series of refined meshes, given $\epsilon=10^{-6}$.}
\label{fig:square_cond_h}
\end{figure}

We now fix $\epsilon$ to be $10^{-6}$ in the initial mesh and change the mesh size $h$ (via global refinement) to further compare conditioning. First, we observe in Figure \ref{fig:square_cond_h} that for all the degrees considered, the condition number in Cases (2) and (3) is constantly lower than that in Case (1). Second, higher-degree splines generally yield higher condition numbers under the same mesh size in all the cases. Third, in the low degree case (e.g., $p=2$), the condition number tends to be controlled by the mesh size $h$, and it increases in the order of $h^{-2}$ as $h$ decreases, as it is expected. On the other hand, in the high degree case (e.g., $p=4$), the condition number is more controlled by the size $\epsilon$ of cut elements. As $h$ goes down, the effective area ratio of a cut element actually becomes larger, and this is why the condition number in Case (1) decreases as $h$ decreases, whereas it remains almost constant in Cases (2) and (3); see Figure \ref{fig:square_cond_h}(c). 

In both the convergence test and condition test, we have shown that the one-sided flux works almost the same as the symmetric average flux with stabilization. The one-sided flux is chosen in the following tests as it generally needs to stabilize fewer elements than the symmetric average flux case.

\begin{figure}[htb]
\centering
\begin{tabular}{ccc}
\includegraphics[width=0.3\linewidth]{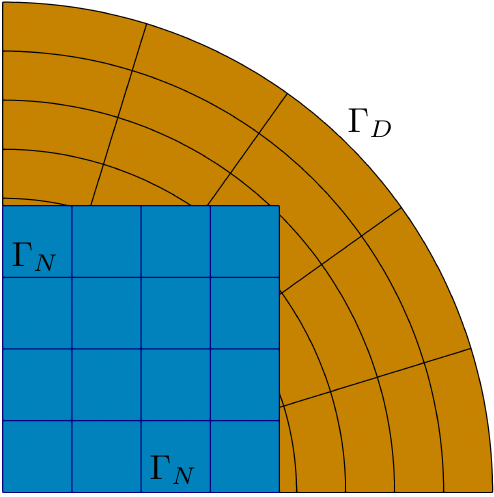} &
\includegraphics[width=0.3\linewidth]{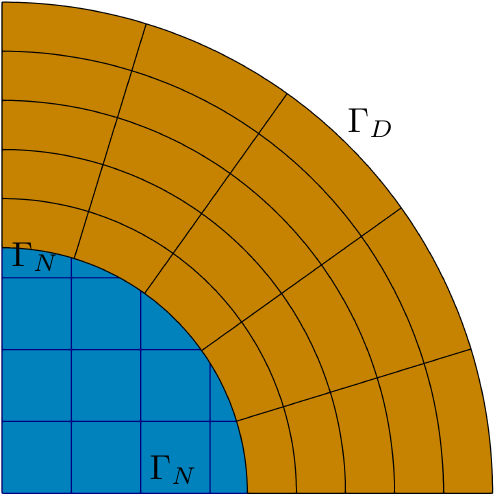} &
\includegraphics[width=0.3\linewidth]{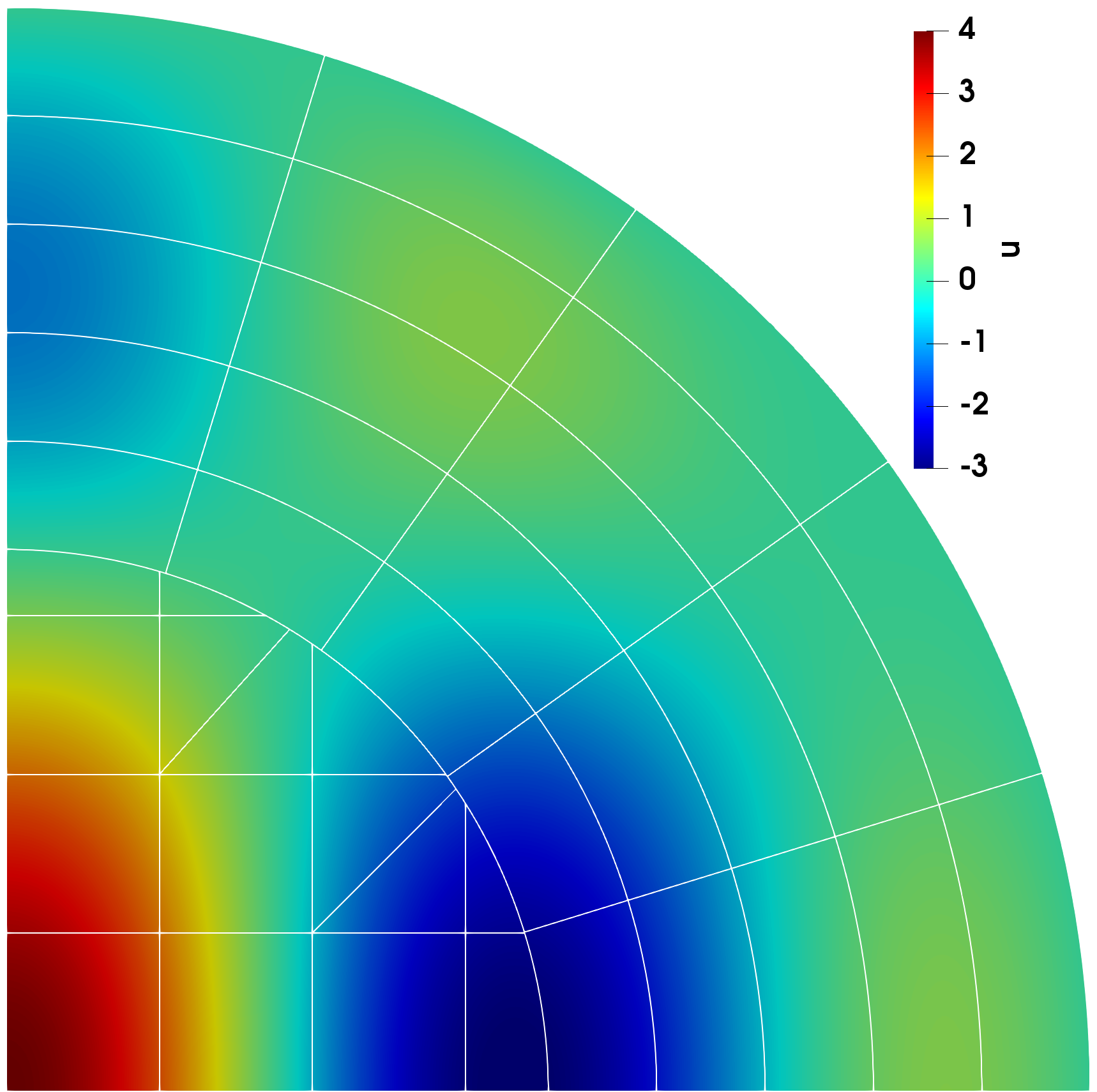} \\
(a) & (b) & (c)\\
\end{tabular}
\caption{The disk example by the union of an annulus with a rectangle. (a) The rectangle on top of the annulus, (b) the annulus on top of the rectangle, and (c) the solution field using the mesh in (b) with quartic splines, where the white lines represent both the B\'{e}zier mesh and the quadrature mesh for cut elements.}
\label{fig:ex_disk_geom}
\end{figure}

\begin{figure}[!htb]
\centering
\begin{tabular}{cc}
\includegraphics[width=0.45\linewidth]{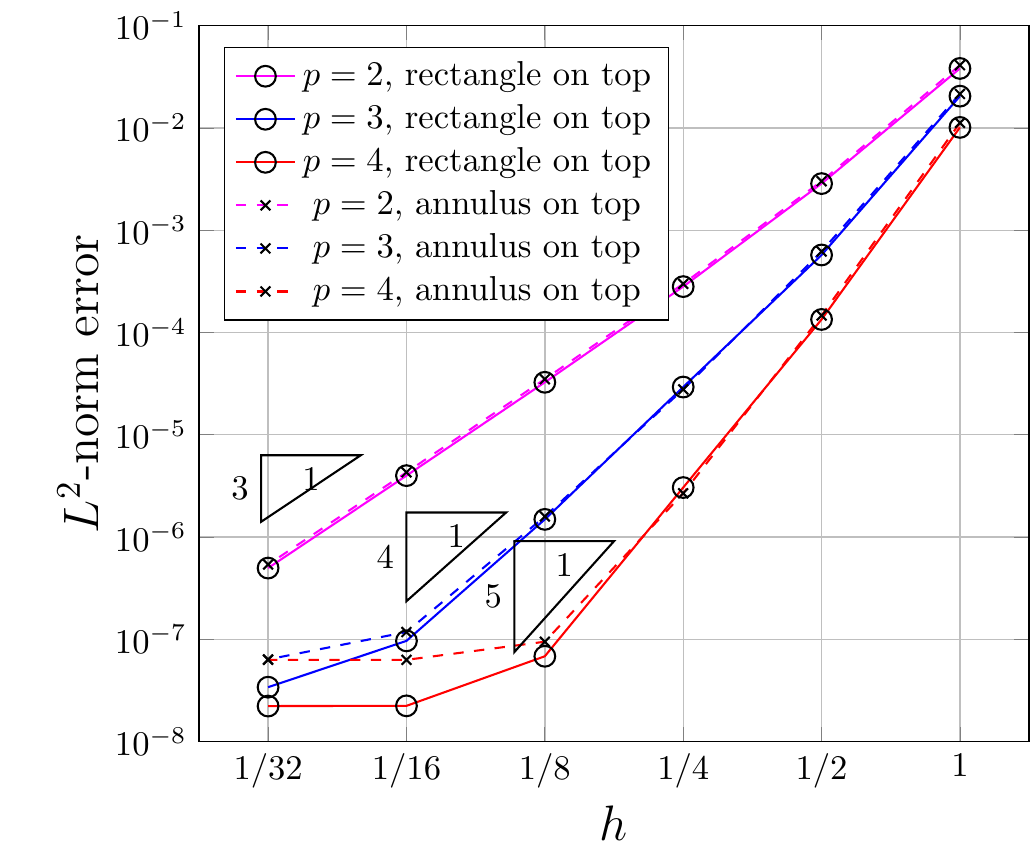} &
\includegraphics[width=0.45\linewidth]{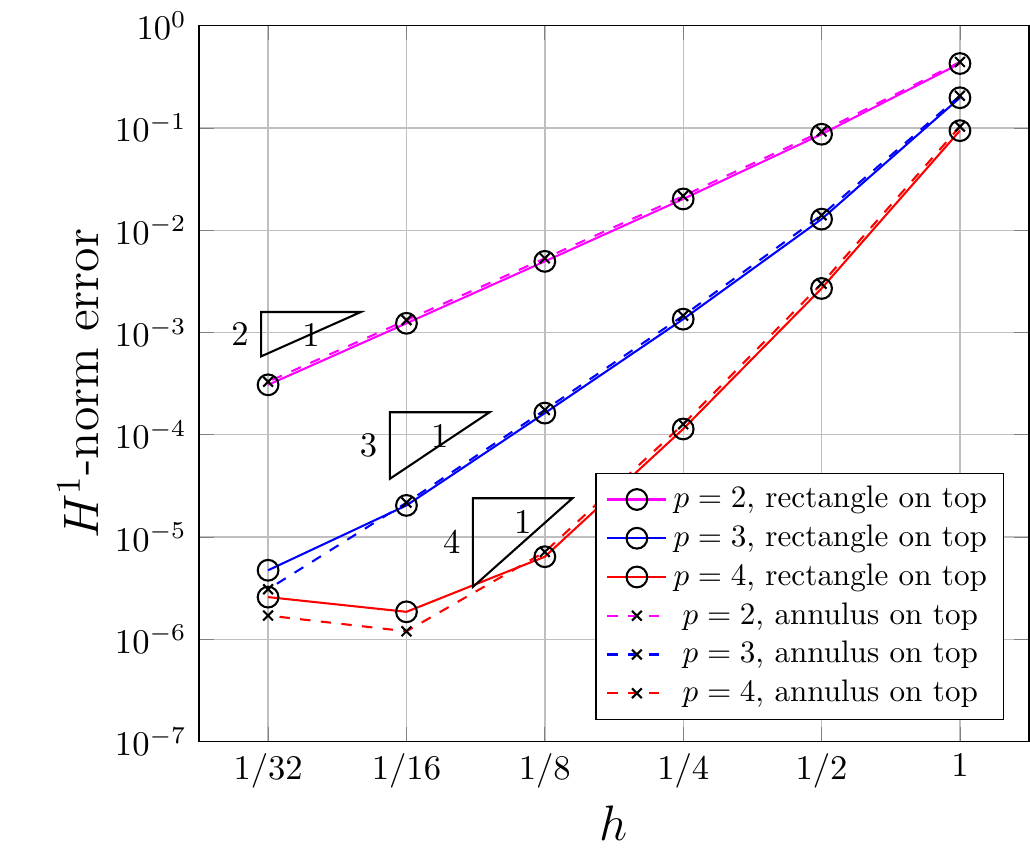} \\
(a) & (b)\\
\end{tabular}
\caption{Convergence plots of the disk example in the $L^2$-norm error (a) and the $H^1$-norm error (b), where the solid and dashed lines represent results corresponding to Figure \ref{fig:ex_disk_geom}(a, b), respectively.}
\label{fig:ex_disk_conv}
\end{figure}

\subsection{Influence of Patch Ordering}
We next study a disk geometry centered at $(0,0)$ with a radius of 2. It is formed by the union of an annulus with a rectangle, and we focus on a quarter of it due to symmetry. The annulus is represented by a NURBS patch with an inner radius of 1 and an outer radius of 2, which has a $5\times 5$ mesh. The rectangle is a B-spline patch covering the region $[0,1.13]\times [0,1.17]$ with a $4\times 4$ mesh. We consider two arrangements of patches to check if there is a difference in the numerical performance: (1) the rectangle on top of the annulus, and (2) the annulus on top of the rectangle; see Figure \ref{fig:ex_disk_geom}(a, b). In the convergence study, we take the manufactured solution $u(x,y) = (4-x^2-y^2)\cos(\pi x) \cos\left(\frac{\pi y}{2}\right)$ ($x\geq 0,\ y\geq 0,\ x^2+y^2\leq 4$), and use bases of degrees 2, 3 and 4 everywhere. Homogeneous Dirichlet and Neumann boundary conditions are shown in Figure \ref{fig:ex_disk_geom}(a, b). In Figure \ref{fig:ex_disk_conv}, we observe the expected optimal convergence in all the cases before the $L^2$-norm error reaches $10^{-8}$. Afterwards we observe a deteriorated behavior due to the dominance of the geometric error, which is induced by the fixed tolerance setting ($\sim 10^{-8}$) in OpenCASCADE. Moreover, in all the convergence plots, we do not find distinguishable differences in the two different arrangements before the $L^2$-norm error hits the geometric tolerance; compare dashed and solid lines.

\begin{figure}[htb]
\centering
\begin{tabular}{cc}
\includegraphics[width=0.45\linewidth]{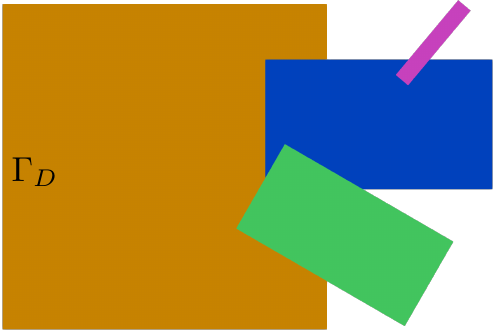} &
\includegraphics[width=0.45\linewidth]{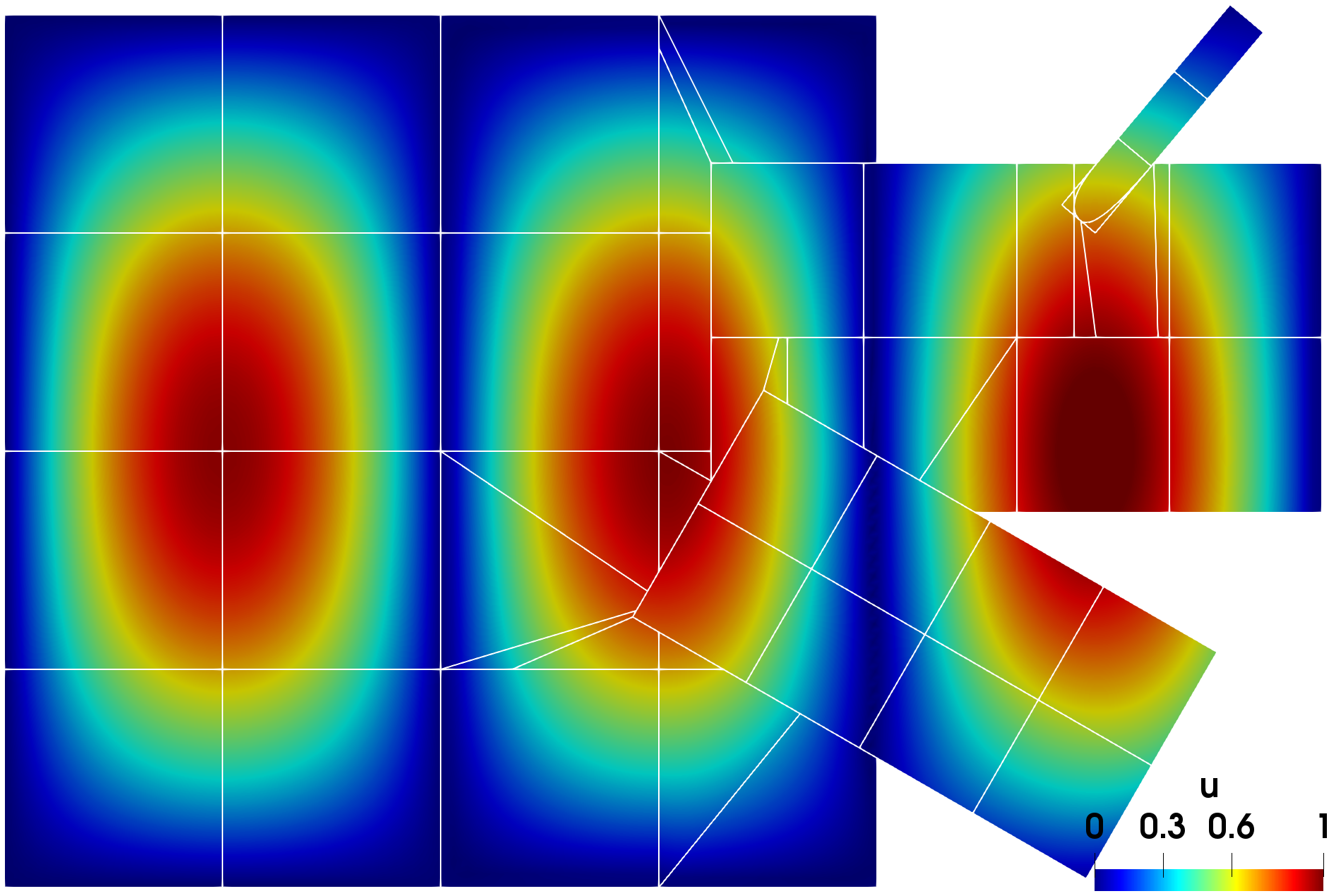} \\
(a) & (b)\\
\includegraphics[width=0.45\linewidth]{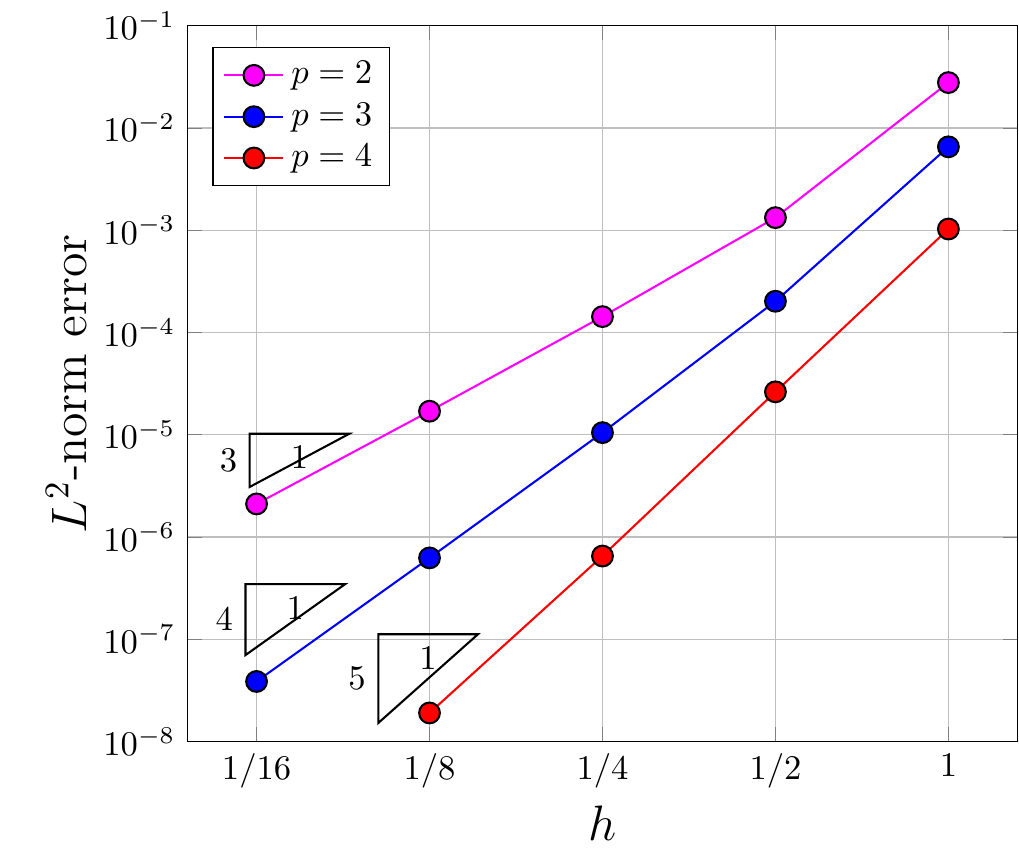} &
\includegraphics[width=0.45\linewidth]{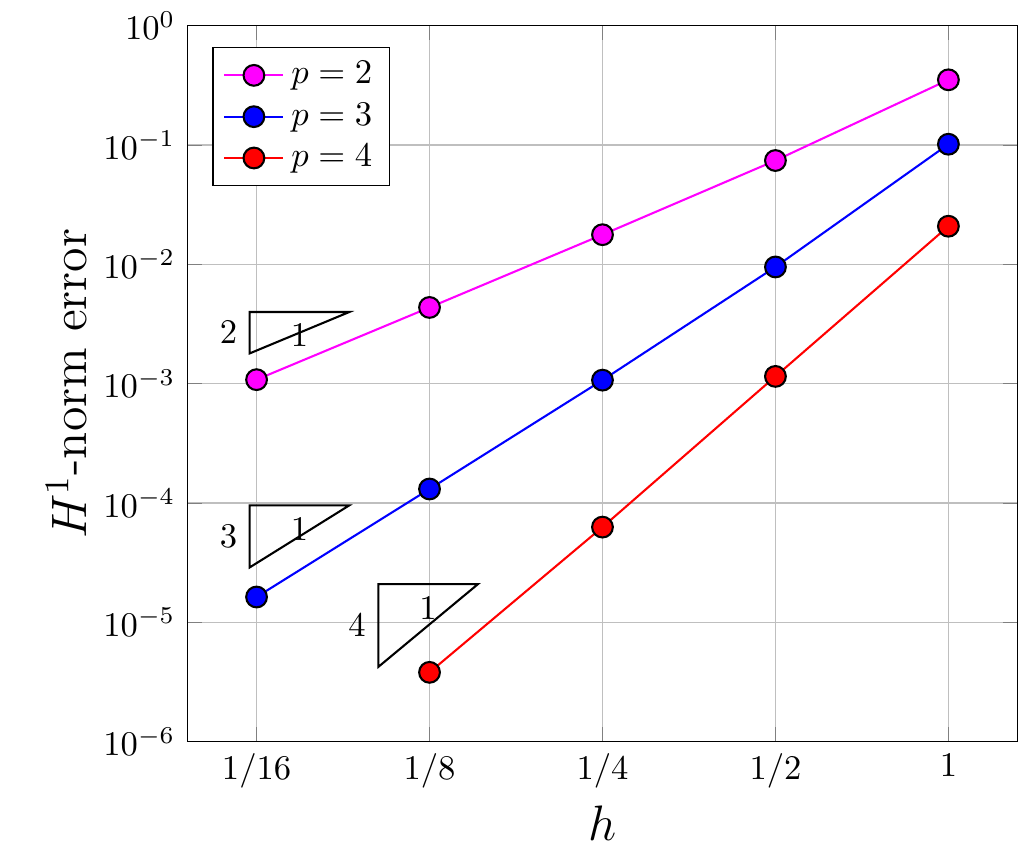} \\
(c) & (d)
\end{tabular}
\caption{The example of multiple overlapping patches. (a) Arrangement of the patches and the homogeneous boundary Dirichlet boundary condition, (b) the solution field on the input mesh using the quadratic basis, (c) the convergence plot in the $L^2$-norm error, and (d) the convergence plot in the $H^1$-norm error.}
\label{fig:three_patch}
\end{figure}

\subsection{Multiple Overlapping Patches}
We further study an example that involves multiple overlapping patches; see Figure \ref{fig:three_patch}(a). In particular, there is a region where three patches are overlapped; see the intersection region of orange, blue and green patches. All the patches are B-spline patches. We take the manufactured solution $u(x,y) = \sin(2\pi x) \sin(\pi y)$ for the convergence test, and we use bases of degrees 2, 3 and 4 to solve the Poisson's problem. The homogeneous Dirichlet boundary condition is imposed according to Figure \ref{fig:three_patch}(a), whereas the Neumann boundary condition is imposed on all the other boundaries. Again, we observe the expected optimal convergence for all the degree considered; see Figure \ref{fig:three_patch}(c, d). Note that the blue patch provides the one-sided flux to the orange patch, and it is in the meanwhile cut by the green patch, so generally it needs stabilization. Recall that we set the area-ratio threshold to be $10\%$. In our test cases, we observe that usually around $3\%$ to $7\%$ of cut elements are identified as bad elements. In other words, the stabilization is only needed for a small number of elements.

\subsection{A Complex Geometry Obtained via Boolean Operations}
As the last example, we consider a more complex geometry, a toy car wheel model in the planar domain as shown in Figure \ref{fig:wheel}, to show the potential capability of the proposed method. Such a geometry can be easily created with a combination of both trimming and union operations, more specifically, by first generating two annuli, putting handles on top of them via union, and finally creating holes of different sizes via trimming. Two boundary conditions are shown in Figure \ref{fig:wheel}(a), whereas the homogeneous Neumann boundary condition is imposed on all the other boundaries. We use quadratic splines to solve the linear elasticity problem on a series of meshes under the plane strain assumption, where the material is homogeneous and isotropic with the Young's modulus and the Poisson's ratio being $1$ and $0.3$, respectively. In particular, we show the displacement field on the initial mesh and the von Mises stress on the mesh after three times of global refinement; see Figure \ref{fig:wheel}(b, c), respectively. As expected, we observe stress concentrations around holes as well as sharp corners. Moreover, our computational tool is indeed robust in reparameterizing cut elements and handling union interfaces, and it can be easily adapted to solving different elliptic PDEs.

\begin{figure}[htb]
\centering
\begin{tabular}{ccc}
\includegraphics[width=0.3\linewidth]{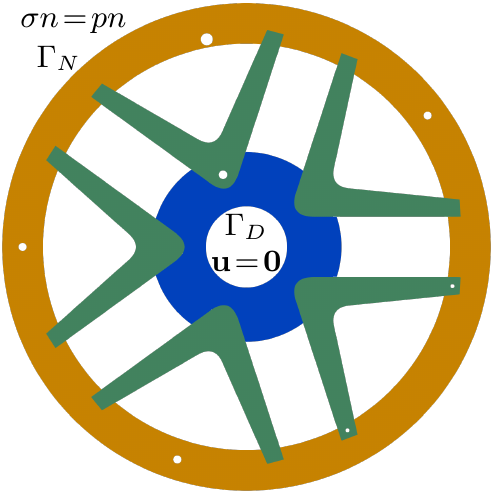} &
\includegraphics[width=0.3\linewidth]{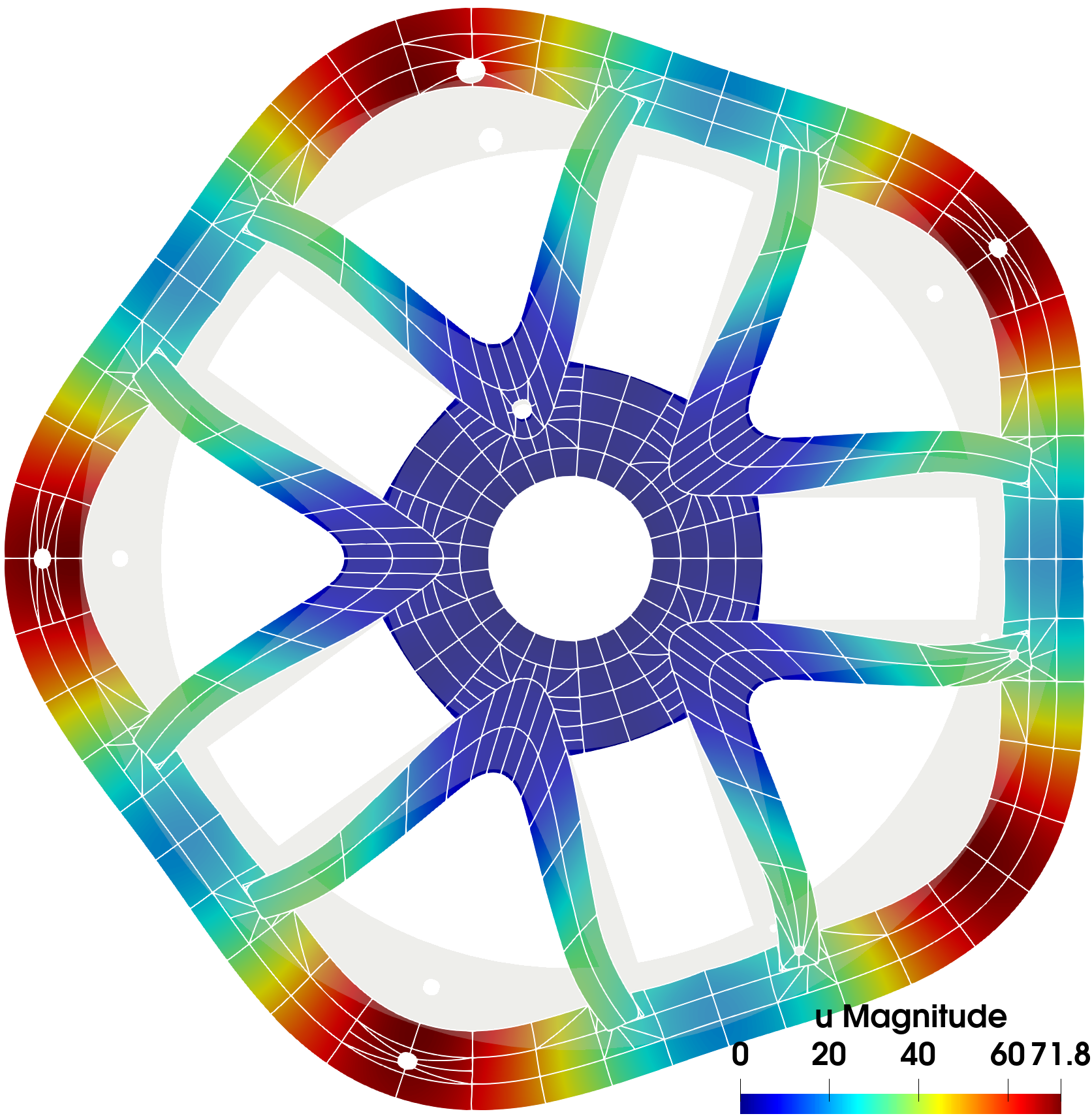} &
\includegraphics[width=0.3\linewidth]{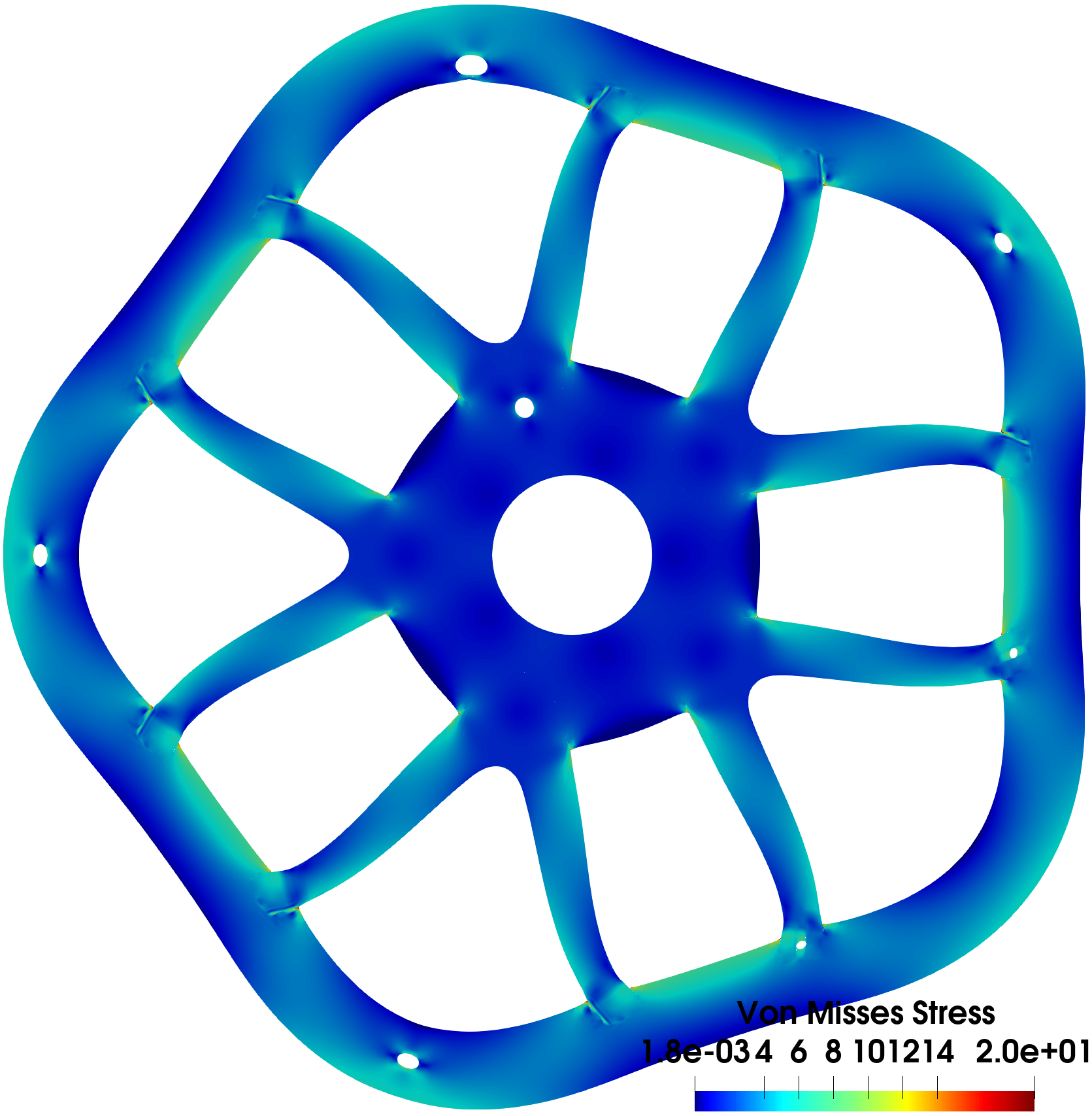} \\
(a) & (b) & (c) \\
\end{tabular}
\caption{A toy car wheel example in the planar domain. (a) Arrangement of patches and boundary conditions, (b) the displacement field (magnitude) on the initial mesh, and (c) the von Mises stress on the mesh after three times of refinement. In (a), $\sigma$ is the stress tensor, $p=1$ is the pressure, $n$ is the outward unit normal, and $\mathbf{u}$ is the displacement vector. The results are visualized on deformed geometries. The white lines represent both the B\'{e}zier mesh and the quadrature mesh for cut elements.}
\label{fig:wheel}
\end{figure}

\section{Conclusion}
\label{sec:con}

We have presented a framework that supports the union operation in isogeometric analysis. As union involves both trimming and interfaces, a so-called minimal stabilization method has been proposed in the context of union to address the stability issue that arises from bad cut elements. We present increasingly complex examples and our results are supported by a comprehensive theory. In the future, we will continue the work in 3D to enable the union operation in isogeometric V-reps. On the other hand, the theoretical study on preconditioning remains an open problem, and it will be another interesting direction to investigate.

\bibliographystyle{siamplain}
\bibliography{ref}
\end{document}